\def\BState{\State\hskip-\ALG@thistlm}
\numberwithin{equation}{section}
\newtheorem{theorem}{Theorem}[section]
\newtheorem{cor}[theorem]{Corollary}
\newtheorem{lem}[theorem]{Lemma}
\newtheorem{prop}[theorem]{Proposition}
\newtheorem{defn}[theorem]{Definition}
\newtheorem{rem}[theorem]{Remark}
\newtheorem{con}[theorem]{Condition}
\newcommand{\ds}{\displaystyle}
\renewcommand*\env@matrix[1][*\c@MaxMatrixCols c]{%
	\hskip -\arraycolsep
	\let\@ifnextchar\new@ifnextchar
	\array{#1}}
\title{A $q$-analogue of distance matrix of bi-block graphs}
\author{Joyentanuj Das\footnote{Department of Mathematics, College of Engineering and Technology, SRM Institute of Science and Technology, Kattankulathur, Chennai, 603203, India. \indent  Email: joyentanuj@gmail.com,  joyentad@srmist.edu.in}}
\date{}
\begin{document}

\maketitle

\begin{abstract}
A $q$-analogue of the distance matrix, referred to as the \emph{$q$-distance matrix}, is obtained from the distance matrix by replacing each nonzero entry $\alpha$ with the sum $1+q+\cdots+q^{\alpha-1}$. This notion was introduced independently by Bapat, Lal, and Pati~\cite{Ba-Lal-Pati}, and by Yan and Yeh~\cite{Yan}. A connected graph is called a \emph{bi-block graph} if each of its blocks is a complete bipartite graph. In this paper, we derive explicit formulas for the determinant and the inverse of the $q$-distance matrix of bi-block graphs. These results both generalize the corresponding formulas for the distance matrix of bi-block graphs obtained in~\cite{Hou3} and extend the results for block graphs in~\cite{Xing} to the class of bi-block graphs.

\end{abstract}

\noindent {\sc\textsl{Keywords}:} Bipartite graphs, Bi-block graphs, $q$-distance matrix, Determinant, Inverse.

\noindent {\sc\textbf{MSC}:}  05C12, 05C50
\section{Introduction  and Motivation}
Let $G = (V(G), E(G))$ be a finite, simple, connected graph, where $V(G)$ denotes the vertex set and $E(G) \subseteq V(G) \times V(G)$ the edge set. For brevity, we write $G = (V, E)$ whenever the context is clear. We use $i \sim j$ to indicate adjacency between vertices $i, j \in V$.  

Before proceeding further, we first set up some notations that will be frequently used throughout this article. Let $\mathbf{I}_n$, $\mathds{1}_n$, and $\mathbf{e}_i$ denote, respectively, the identity matrix, the column vector of all ones, and the column vector having $1$ in the $i^{\text{th}}$ entry with all other entries equal to zero. 
In addition, $\mathbf{J}_{m \times n}$ represents the $m \times n$ matrix of all ones, and when $m=n$ we simply write $\mathbf{J}_m$. We denote by $\mathbf{0}_{m \times n}$ the zero matrix of order $m \times n$, and abbreviate this as $\mathbf{0}$ whenever the order is clear from context. For any matrix $A$, we write $A^T$ to indicate its transpose.

A graph is called \emph{bipartite} if its vertex set admits a partition into two subsets $X \cup Y$ such that $E \subseteq X \times Y$. A bipartite graph is said to be \emph{complete bipartite graph}, denoted by $K_{s,t}$, when $|X| = s$, $|Y| = t$, and every vertex of $X$ is adjacent to every vertex of $Y$.  

Equipped with the shortest--path distance, a connected graph $G$ can be regarded as a metric space. Specifically, the metric $d(i,j)$ is defined as the length of the shortest path between vertices $i$ and $j$, with $d(i,i)=0$. For later use, we recall the notions of the \emph{distance matrix} and the \emph{Laplacian matrix} associated with a graph $G$. 

Let $G$ be a graph on $n$ vertices. The \emph{distance matrix} of $G$ is the $n \times n$ matrix $D(G) = [d_{ij}]$, where $d_{ij} = d(i,j)$. The \emph{Laplacian matrix} of $G$ is the $n \times n$ matrix $L(G) = [l_{ij}]$, defined by  
\[
l_{ij} =
\begin{cases}
	\delta_i & \text{if } i=j, \\
	-1 & \text{if } i \neq j \text{ and } i \sim j, \\
	0 & \text{otherwise},
\end{cases}
\]
where $\delta_i$ denotes the degree of vertex $i$. It is well known that $L(G)$ is symmetric and positive semidefinite. Moreover, the all-ones vector $\mathds{1}$ is an eigenvector of $L(G)$ corresponding to the smallest eigenvalue $0$, so that $L(G)\mathds{1} = \mathbf{0}$ and $\mathds{1}^T L(G) = \mathbf{0}$ (see~\cite{Bapat}).  

The theory of distance matrices has a long and rich development. The subject traces back to the classical work of Graham and Pollack~\cite{Gr1}, who established that for any tree $T$ with $n$ vertices,  
\[
\det D(T) = (-1)^{n-1}(n-1)2^{\,n-2}.
\]
Interestingly, this determinant depends only on the order of the tree, not on its structure. Their result immediately attracted considerable attention, leading to a series of extensions and generalizations (see, e.g., \cite{Ba-Kr-Neu,Bapat1,Gr3,Gr2}).  

In a subsequent paper, Graham and Lov\'asz~\cite{Gr2} derived an explicit expression for the inverse of the distance matrix of a tree $T$:  
\[
D(T)^{-1} = -\tfrac{1}{2} L(T) + \dfrac{1}{2(n-1)} \tau \tau^T,
\qquad \text{where } \tau = (2-\delta_1,\, 2-\delta_2,\, \dots,\, 2-\delta_n)^T.
\]
This identity represents the inverse of the distance matrix in terms of the Laplacian matrix and opened an active line of research on extending the formula to broader graph classes. Subsequent contributions have established analogous results for weighted trees~\cite{Ba-Kr-Neu}, bidirected trees~\cite{Ba-Lal-Pati}, block graphs~\cite{Bp3}, completely positive graphs~\cite{JD}, multi-block graphs~\cite{JD1}, weighted cactoid-type digraphs~\cite{JD2}, cactoid digraphs~\cite{Hou1}, cycle--clique graphs~\cite{Hou2}, bi-block graphs~\cite{Hou3}, distance well-defined graphs~\cite{Zhou1}, and weighted cactoid digraphs~\cite{Zhou2}.  

For an indeterminate $q$, Bapat, Lal, and Pati~\cite{Ba-Lal-Pati}, and independently Yan and Yeh~\cite{Yan}, introduced a $q$-analogue of the distance matrix of a graph $G$, denoted by $\mathscr{D} = \mathscr{D}(G) = (\mathscr{D}_{ij})$, defined as  
\[
\mathscr{D}_{ij} =
\begin{cases}
	[\alpha] = 1+q+\cdots+q^{\alpha-1}, & \text{if } d_{ij} = \alpha \ge 1, \\[6pt]
	0, & \text{if } i=j.
\end{cases}
\]
The matrix $\mathscr{D}$ is called the \emph{$q$-distance matrix} of $G$. In particular, when $q=1$, it reduces to the standard distance matrix.  

A vertex $v$ of a graph $G$ is a \emph{cut-vertex} if the deletion of $v$ disconnects $G$. A \emph{block} of $G$ is a maximal connected subgraph with no cut-vertex. In~\cite{Gr3}, Graham, Hoffman, and Hosoya derived a formula expressing the determinant of the distance matrix of a graph (and of strongly connected digraphs) in terms of its blocks, thereby explaining the classical formula for trees. The $q$-distance matrix was subsequently employed by Yan and Yeh~\cite{Yan} to obtain a $q$-analogue of Graham and Pollack’s determinant formula for trees.  

In~\cite{Bp3}, Bapat and Sivasubramanian computed the determinant and inverse of the distance matrix for block graphs, i.e., graphs in which every block is a complete graph. Later, Xing and Du~\cite{Xing} extended this result to the $q$-distance matrix of block graphs. In~\cite{Hou3}, Hou and Sun obtained analogous formulas for the distance matrix of bi-block graphs, i.e., graphs in which every block is a complete bipartite graph.  

In the present work, we extend these results further by deriving explicit formulas for the determinant and the inverse of the $q$-distance matrix of bi-block graphs. We note that the theory of $q$-distance matrices has been the subject of active investigation in recent years; see, for instance, trees with matrix weights~\cite{Barik}, $3$-hypertrees~\cite{Siva}, $q$-analogue of Graham, Hoffman and Hosoya’s theorem~\cite{Siva1} and trees and two quantities relating to permutations~\cite{Yan}.

\section{Notations and Preliminaries}\label{sec:notations}
\subsection{Preliminary results}
Now we will recall a few preliminary results from matrix theory and graph theory which will useful for subsequent sections. Let $\mathscr{M}$ be an $n\times n$ matrix partitioned as
\begin{equation}\label{eqn:M}
	\mathscr{M}= \left[
	\begin{array}{cc}
		A& B \\
		C & D
	\end{array}
	\right],
\end{equation}
where $A$ and $D$ are square matrices. If $A$ is nonsingular, then the Schur complement of $A$ in $\mathscr{M}$ is defined to be the matrix $D-CA^{-1}B$. Simillarly, if $D$ is nonsingular, then the Schur complement of $D$ in $\mathscr{M}$ is defined to be the matrix $A - BD^{-1}C$. The next result uses Schur complement to find the determinant of a $2 \times 2$ block matrix.

\begin{prop}\label{prop:detblock}\cite{Zhang1}
	Let $\mathscr{M}$ be a nonsingular matrix partitioned as in Eqn~(\ref{eqn:M}). If $A$ is square and invertible, then $\det(\mathscr{M}) = \det(A) \det(D-CA^{-1}B)$ and if $D$ is square and invertible, then $\det(\mathscr{M}) = \det(D) \det(A - BD^{-1}C)$.
\end{prop}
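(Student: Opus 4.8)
The plan is to prove both identities by exhibiting an explicit block triangular factorization of $\mathscr{M}$ and then applying the multiplicativity of the determinant together with the fact that the determinant of a block triangular matrix is the product of the determinants of its diagonal blocks. First I would treat the case in which $A$ is square and invertible, and write down the factorization
\[
\mathscr{M}=\left[\begin{array}{cc} A & B \\ C & D \end{array}\right]
=\left[\begin{array}{cc} \mathbf{I} & \mathbf{0} \\ CA^{-1} & \mathbf{I} \end{array}\right]
\left[\begin{array}{cc} A & B \\ \mathbf{0} & D-CA^{-1}B \end{array}\right].
\]
This is verified by carrying out the block multiplication: the $(2,1)$ block becomes $CA^{-1}A = C$, the $(2,2)$ block becomes $CA^{-1}B + (D - CA^{-1}B) = D$, and the top row is unchanged. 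The first factor is block lower triangular with identity diagonal blocks, hence has determinant $1$, while the second factor is block upper triangular, so its determinant equals $\det(A)\det(D-CA^{-1}B)$. Multiplicativity then yields $\det(\mathscr{M}) = \det(A)\det(D-CA^{-1}B)$.

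The second identity is entirely symmetric, so I would simply mirror the argument. Assuming $D$ is square and invertible, I would use the factorization
\[
\mathscr{M}
=\left[\begin{array}{cc} \mathbf{I} & BD^{-1} \\ \mathbf{0} & \mathbf{I} \end{array}\right]
\left[\begin{array}{cc} A-BD^{-1}C & \mathbf{0} \\ C & D \end{array}\right],
\]
again checked by direct block multiplication, where now the first factor is block upper triangular with identity diagonal blocks and the second is block lower triangular. The same reasoning gives $\det(\mathscr{M}) = \det(A-BD^{-1}C)\det(D)$.

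The only step that requires genuine justification rather than routine verification is the claim that a block triangular matrix has determinant equal to the product of the determinants of its diagonal blocks. I would supply this either by a generalized Laplace expansion along the rows (or columns) corresponding to the vanishing off-diagonal block, or by a short induction on the order of the leading diagonal block via ordinary cofactor expansion. This is the main---indeed the only nontrivial---point of the proof, and it is entirely standard, so I expect no real obstacle; the remainder is the mechanical confirmation of the two factorizations above.
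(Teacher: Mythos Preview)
Your proof is correct and entirely standard. Note that the paper does not actually prove this proposition at all: it is stated with a citation to \cite{Zhang1} and used as a black box, so there is no paper proof to compare against---your argument is precisely the classical one that any reference would give.
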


The next result gives us the inverse of a partitioned matrix using Schur complement, whenever the matrix is invertible.
\begin{prop}\label{prop:schur}\cite{Zhang1}
	Let $\mathscr{M}$ be a nonsingular matrix partitioned as in Eqn~(\ref{eqn:M}). If $A$ is square and invertible, then
	\[
	\mathscr{M}^{-1} = \begin{bmatrix}[cc]
	A^{-1} + A^{-1} B (D-CA^{-1}B)^{-1} C A^{-1} & -A^{-1} B (D-CA^{-1}B)^{-1}\\
	-(D-CA^{-1}B)^{-1} C A^{-1} & (D-CA^{-1}B)^{-1}
	\end{bmatrix}
	\]
	and if $D$ is square and invertible, then
	\[
	\mathscr{M}^{-1} = \begin{bmatrix}[cc]
		(A - BD^{-1}C)^{-1} & -(A - BD^{-1}C)^{-1}BD^{-1}\\
		-D^{-1}C(A - BD^{-1}C)^{-1} & D^{-1} + D^{-1}C(A - BD^{-1}C)^{-1}BD^{-1}
	\end{bmatrix}
	\]
\end{prop}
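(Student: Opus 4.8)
The plan is to prove the formula by means of the block $LDU$ (lower--diagonal--upper) factorization of $\mathscr{M}$ and then invert each factor separately. I will treat in detail the case where $A$ is square and invertible; the case where $D$ is invertible follows by the symmetric argument, obtained by interchanging the roles of the two diagonal blocks. Throughout, write $S := D - CA^{-1}B$ for the Schur complement of $A$.

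First I would record the factorization
\[
\mathscr{M} = \begin{bmatrix} A & B \\ C & D \end{bmatrix} = \begin{bmatrix} \mathbf{I} & \mathbf{0} \\ CA^{-1} & \mathbf{I} \end{bmatrix} \begin{bmatrix} A & \mathbf{0} \\ \mathbf{0} & S \end{bmatrix} \begin{bmatrix} \mathbf{I} & A^{-1}B \\ \mathbf{0} & \mathbf{I} \end{bmatrix},
\]
which is confirmed by a single block multiplication (the $(2,2)$ block collapses to $CA^{-1}B + S = D$). Next I would establish that $S$ is invertible: by Proposition~\ref{prop:detblock} we have $\det \mathscr{M} = \det(A)\det(S)$, and since $\mathscr{M}$ and $A$ are both nonsingular, so is $S$. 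Consequently all three factors in the display are invertible, and $\mathscr{M}^{-1}$ is the product of their inverses taken in the reverse order.

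The two triangular factors are unipotent, so their inverses are obtained merely by negating the off-diagonal block, while the middle factor inverts blockwise into $\mathrm{diag}(A^{-1},\,S^{-1})$. This yields
\[
\mathscr{M}^{-1} = \begin{bmatrix} \mathbf{I} & -A^{-1}B \\ \mathbf{0} & \mathbf{I} \end{bmatrix} \begin{bmatrix} A^{-1} & \mathbf{0} \\ \mathbf{0} & S^{-1} \end{bmatrix} \begin{bmatrix} \mathbf{I} & \mathbf{0} \\ -CA^{-1} & \mathbf{I} \end{bmatrix}.
\]
Multiplying these three blocks out and substituting $S = D - CA^{-1}B$ reproduces exactly the four-block expression asserted in the proposition; in particular the $(1,1)$ block emerges as $A^{-1} + A^{-1}B\,S^{-1}C A^{-1}$.

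There is no genuine obstacle here, since the argument is routine linear algebra; the only points demanding care are the bookkeeping in the final triple product and the justification that $S$ is invertible, which I handle through the determinant identity of Proposition~\ref{prop:detblock}. As an alternative that avoids introducing the factorization altogether, one could instead verify directly that the proposed right-hand side satisfies $\mathscr{M}\,\mathscr{M}^{-1} = \mathbf{I}_n$ by block multiplication, repeatedly invoking the cancellations $CA^{-1}\cdot A = C$ and $S\,S^{-1} = \mathbf{I}$; this is equally elementary but requires checking all four blocks of the product explicitly.
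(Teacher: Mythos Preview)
Your proof is correct. The block $LDU$ factorization you write down is valid, the invertibility of $S$ is properly justified via Proposition~\ref{prop:detblock}, and the inversion of each factor together with the final triple product yields exactly the claimed formula. The symmetric argument for the case when $D$ is invertible is indeed routine.

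Note, however, that the paper does not supply its own proof of this proposition: it is quoted as a standard result from~\cite{Zhang1} and used as a black box in the Schur-complement computations of Section~3. So there is no ``paper's proof'' to compare against. Your argument is the textbook one (and essentially the one found in~\cite{Zhang1}); the alternative you sketch at the end---direct verification that the proposed inverse multiplies back to the identity---is equally standard and would also be acceptable.
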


Now we state a result without proof that gives the determinant and inverse for matrix of the  form $a\mathbf{I}+b\mathbf{J}$.
\begin{lem}\label{lem:aI+bJ}
	Let $n \geq 2$ and $\mathbf{J}_n$, $\mathbf{I}_n$ be matrices as defined before. For $a \neq 0$, the determinant of $a\mathbf{I}_n+b\mathbf{J}_n$ is given by $a^{n-1}(a+nb)$. Moreover, the matrix is invertible if and only if $a+nb \neq 0$ and the inverse is given by $$(a\mathbf{I}_n + b\mathbf{J}_n)^{-1} = \frac{1}{a} \left(\mathbf{I}_n - \frac{b}{a+nb} \mathbf{J}_n\right).$$
\end{lem}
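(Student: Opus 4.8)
The plan is to reduce everything to the rank-one structure of $\mathbf{J}_n$ together with the single relation $\mathbf{J}_n^2 = n\mathbf{J}_n$. First I would compute the determinant via the spectrum of $\mathbf{J}_n$. Writing $\mathbf{J}_n = \mathds{1}_n\mathds{1}_n^T$, we see that $\mathbf{J}_n$ has rank one, with eigenvalue $n$ on the eigenvector $\mathds{1}_n$ and eigenvalue $0$ on the complementary $(n-1)$-dimensional space $\mathds{1}_n^{\perp}$. Consequently $a\mathbf{I}_n + b\mathbf{J}_n$ acts as multiplication by $a+nb$ on $\mathds{1}_n$ and as multiplication by $a$ on $\mathds{1}_n^{\perp}$. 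Since the determinant is the product of the eigenvalues counted with multiplicity, it equals $a^{n-1}(a+nb)$, as claimed.

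For invertibility I would simply read off the condition from this eigenvalue description: the matrix is invertible if and only if all its eigenvalues are nonzero, that is, $a \neq 0$ and $a+nb \neq 0$. Under the standing hypothesis $a \neq 0$ this collapses to the stated condition $a + nb \neq 0$. (Equivalently, one obtains the same conclusion directly from the determinant formula, since $a^{n-1}(a+nb) \neq 0$ precisely when both factors are nonzero.)

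For the inverse, rather than invoke the Sherman--Morrison formula I would verify the proposed expression by direct multiplication, as the only algebraic fact required is $\mathbf{J}_n^2 = n\mathbf{J}_n$. Expanding
\[
(a\mathbf{I}_n + b\mathbf{J}_n)\cdot \frac{1}{a}\Bigl(\mathbf{I}_n - \frac{b}{a+nb}\,\mathbf{J}_n\Bigr),
\]
the diagonal contribution produces $\mathbf{I}_n$, while the coefficient of $\mathbf{J}_n$ vanishes because $-\frac{ab}{a+nb} + b - \frac{nb^2}{a+nb} = b - \frac{b(a+nb)}{a+nb} = 0$, after substituting $\mathbf{J}_n^2 = n\mathbf{J}_n$. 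Hence the product equals $\mathbf{I}_n$, which confirms the formula.

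There is no genuine obstacle here, as the lemma is elementary. The only point requiring mild care is the bookkeeping of the $\mathbf{J}_n$-coefficient in the inverse verification, where one must apply $\mathbf{J}_n^2 = n\mathbf{J}_n$ before collecting the terms over the common denominator $a+nb$. Everything else follows immediately from the rank-one structure of $\mathbf{J}_n$.
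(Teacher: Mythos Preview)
Your argument is correct in every part: the eigenvalue computation for $a\mathbf{I}_n + b\mathbf{J}_n$ via the rank-one structure of $\mathbf{J}_n$, the invertibility criterion, and the direct verification of the inverse using $\mathbf{J}_n^2 = n\mathbf{J}_n$ are all sound. Note that the paper actually states this lemma \emph{without proof}, treating it as a standard elementary fact, so there is no proof in the paper to compare against; your write-up supplies exactly the kind of short justification one would expect for such a result.
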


The concept for cofactor for $q$-distance matrix introduced by Li, et.al. in \cite{Li}. Let $G$ be a graph (strongly connected digraphs) and $\mathscr{D}$ be its $q$-distance matrix which has the following form:
\[ \mathscr{D} = 
\begin{bmatrix}[c|ccc] 
	0 & [\alpha_1] & \cdots & [\alpha_{n-1}] \\ 
	\hline [\beta_1] & & & \\ 
	\vdots & &\mathscr{D}_1 &\\ 
	[\beta_{n-1}] & & & 
	\end{bmatrix} 
\]

Let \( \xi(\mathscr{D}) \) denote the cofactor in position \((1, 1)\) of the matrix obtained by subtracting the first row from all other rows, then \( q^{\alpha_i} \) times the first column from the \((i + 1)\)th column of \( \mathscr{D} \) for \( i = 1, \ldots, n - 1 \). Observe that 
\begin{equation}\label{eqn:D-M}
	\xi(\mathscr{D}) = \det(\mathscr{D}_1 - M),
\end{equation}
where \( M \) is the \((n - 1) \times (n - 1)\) matrix with \([\beta_i + \alpha_j]\) as its \((i, j)\) entry (since \([\alpha + \beta] = q^\beta[\alpha] + [\beta] = [\alpha] + q^\alpha[\beta]\)).

Next, we state the result from \cite{Li} which will help us in finding the determinant and cofactor for a general bi-block graph.
\begin{theorem}[\cite{Li}]\label{thm:det-cof}
	If $G$ is a graph (strongly connected digraph) with blocks $G_1,G_2, \cdots,G_r$ and $\mathscr{D}(G)$ be the $q$-distance matrix of $G$, then
	\[
	\xi(\mathscr{D}(G)) = \prod_{i=1}^{r} \xi(\mathscr{D}(G_i))
	\]
	and
	\[
	\det(\mathscr{D}(G))  = \sum_{i=1}^r \det(\mathscr{D}(G_i)) \prod_{j \ne i} \xi(\mathscr{D}(G_j)).
	\]
\end{theorem}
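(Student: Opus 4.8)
The plan is to argue by induction on the number of blocks $r$, reducing everything to the case of two blocks meeting at a single cut-vertex. If $G$ has a cut-vertex $v$ whose removal separates $G$ into $G_A$ and $G_B$ with $V(G_A)\cap V(G_B)=\{v\}$, I will first establish the two identities
\begin{align*}
\xi(\mathscr{D}(G)) &= \xi(\mathscr{D}(G_A))\,\xi(\mathscr{D}(G_B)),\\
\det(\mathscr{D}(G)) &= \det(\mathscr{D}(G_A))\,\xi(\mathscr{D}(G_B)) + \xi(\mathscr{D}(G_A))\,\det(\mathscr{D}(G_B)).
\end{align*}
Choosing a leaf block $G_r$ (one meeting the rest of $G$ in a single cut-vertex, which exists by the block--cut-tree structure) attached to $G'=G_1\cup\cdots\cup G_{r-1}$, these two identities together with the induction hypothesis applied to $G'$ immediately yield the stated product and sum formulas. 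Throughout I will use that $\xi(\mathscr{D})$ is independent of which vertex is designated first (the $q$-analogue of the permutation-invariance of the sum of cofactors, established in~\cite{Li}), so that I am free to take the cut-vertex $v$ as the reference vertex simultaneously in $G$, $G_A$, and $G_B$.

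For the two-block case I label $v$ first, then the vertices of $G_A\setminus\{v\}$, then those of $G_B\setminus\{v\}$, writing
\[
\mathscr{D}(G)=\begin{bmatrix} 0 & \alpha^T & \gamma^T\\ \beta & F_A & X\\ \delta & Y & F_B\end{bmatrix},
\]
where $F_A,F_B$ are the internal $q$-distance blocks and $X,Y$ the cross blocks. The crucial structural input is that every shortest path between $G_A\setminus\{v\}$ and $G_B\setminus\{v\}$ passes through $v$, so distances are additive through $v$; combined with the identity $[\alpha+\beta]=[\alpha]+q^{\alpha}[\beta]$ this makes the cross entries collapse under exactly the operations defining $\xi$. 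Concretely, I subtract the first row from every other row and then subtract $q^{\,d(v,\cdot)}$ times the first column from each remaining column --- operations that preserve $\det$ and fix the first row and column. A direct check using the additivity shows that $X$ and $Y$ become zero, while $F_A$ turns into $F_A-M_A$ and $F_B$ into $F_B-M_B$, where $M_A,M_B$ are precisely the matrices appearing in~(\ref{eqn:D-M}) for $G_A,G_B$ (the internal distances and the distances to and from $v$ agree in $G$ and in each block). Deleting the first row and column then leaves $\mathrm{diag}(F_A-M_A,\ F_B-M_B)$, whose determinant is $\det(F_A-M_A)\det(F_B-M_B)=\xi(\mathscr{D}(G_A))\,\xi(\mathscr{D}(G_B))$, giving the first identity.

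For the determinant I apply the same determinant-preserving operations to the full matrix, transforming it into the bordered block-diagonal form
\[
\det(\mathscr{D}(G))=\det\begin{bmatrix} 0 & \alpha^T & \gamma^T\\ \beta & P & 0\\ \delta & 0 & Q\end{bmatrix},\qquad P:=F_A-M_A,\ \ Q:=F_B-M_B.
\]
Using the cofactor identity $\det\!\begin{bmatrix} 0 & u^T\\ w & N\end{bmatrix}=-\,u^T\operatorname{adj}(N)\,w$ with $N=\mathrm{diag}(P,Q)$ and $\operatorname{adj}(N)=\mathrm{diag}\!\big(\det(Q)\operatorname{adj}(P),\,\det(P)\operatorname{adj}(Q)\big)$, this expands as $-\det(Q)\,\alpha^T\operatorname{adj}(P)\beta-\det(P)\,\gamma^T\operatorname{adj}(Q)\delta$. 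Recognizing $-\alpha^T\operatorname{adj}(P)\beta=\det\!\begin{bmatrix}0&\alpha^T\\ \beta&P\end{bmatrix}=\det(\mathscr{D}(G_A))$ (again because the $\xi$-operations carry $\mathscr{D}(G_A)$ to this bordered form without changing its determinant), and similarly for $G_B$, together with $\det(P)=\xi(\mathscr{D}(G_A))$ and $\det(Q)=\xi(\mathscr{D}(G_B))$, yields the second identity. Working through the adjugate rather than the inverse is what lets me avoid assuming $P$ or $Q$ is nonsingular. Feeding the two identities into the induction then completes the proof of both formulas.

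I expect the main obstacle to be conceptual rather than computational: ensuring that the pieces produced by rooting at $v$ genuinely coincide with the invariantly defined quantities $\det(\mathscr{D}(G_i))$ and $\xi(\mathscr{D}(G_i))$ in the statement. This rests on (i) the invariance of internal distances and of distances to $v$ when passing between a block and the ambient graph, and (ii) the labeling-independence of $\xi$, which is the one external ingredient I must invoke. The remaining verifications --- that the cross blocks vanish and that $F_A,F_B$ acquire exactly $M_A,M_B$ --- are routine consequences of the additivity of distance through $v$ and the $q$-number identity $[\alpha+\beta]=[\alpha]+q^{\alpha}[\beta]$.
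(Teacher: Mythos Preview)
The paper does not prove this statement at all; it is quoted from \cite{Li} and used as a black box, so there is no ``paper's own proof'' to compare against. Your sketch is correct and is essentially the argument one finds in \cite{Li} (itself the $q$-analogue of the classical Graham--Hoffman--Hosoya proof in \cite{Gr3}): root at a cut-vertex, apply the row/column operations built into the definition of $\xi$ to kill the cross blocks via the identity $[\alpha+\beta]=[\alpha]+q^{\alpha}[\beta]$, then expand the resulting bordered block-diagonal determinant through the adjugate to avoid invertibility hypotheses, and finish by induction on the block--cut tree. The one external ingredient you flag --- that $\xi$ is independent of which vertex is taken first --- is exactly what \cite{Li} establishes and is indeed indispensable for matching the pieces produced at the cut-vertex to the invariantly defined $\xi(\mathscr{D}(G_i))$.
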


\subsection{Notations for bi-block graphs}
Let $G$ be an \( \mathbf{n} \)-vertex bi-block graph with $r$ blocks $K_{m_i,n_i}$, $i=1,2,\cdots,r$, where the vertex set of $K_{m_i,n_i}$ is partitioned into $X_i \cup Y_i$, with $|X_i|=m_i$ and $|Y_i|=n_i$. Then $\mathbf{n} = \sum_{i=1}^r (m_i+n_i)-r+1$. The block degree of a vertex $v$ in $G$, denoted by $\hat{d}_G(v)$, is the number of blocks containing $v$. 

\begin{con}\label{con}
Let $G$ be an \( \mathbf{n} \)-vertex bi-block graph with $r$ blocks $K_{m_i,n_i}$, $i=1,2,\cdots,r$. Let $q$ be an indeterminate such that such that $q \ne -1$ and  $q^2(m_i-1)(n_i-1) \ne 1$ for all $1 \le i \le r$.
\end{con}

\begin{con}\label{con-1}
	Let $G$ be an \( \mathbf{n} \)-vertex bi-block graph with $r$ blocks $K_{m_i,n_i}$, $i=1,2,\cdots,r$. Let $q$ be an indeterminate such that such that $q \ne -1$ and  $(q+1)^2(m_i-1)(n_i-1) \ne m_in_i$ for all $1 \le i \le r$.
\end{con}

Under the assumption of Condition~\ref{con}, the following definitions are well defined. Let $\mathbf{x}_G$ and $\mathbf{y}_G$ be two $\mathbf{n}$-dimensional column vector defined as follows:
\begin{equation}\label{eqn:x-0}
	\mathbf{x}_G(v) = \sum_{v \in X_i} \frac{q(n_i-1)-1}{(q+1)(q^2(m_i-1)(n_i-1)-1)} + \sum_{v \in Y_i} \frac{q(m_i-1)-1}{(q+1)(q^2(m_i-1)(n_i-1)-1)} - (k-1)
\end{equation}
and
\begin{equation}\label{eqn:y-0}
	\mathbf{y}_G(v) = \sum_{v \in X_i} \frac{n_i-1}{q^2(m_i-1)(n_i-1)-1} + \sum_{v \in Y_i} \frac{m_i-1}{q^2(m_i-1)(n_i-1)-1} - (k-1),
\end{equation}
where $\hat{d}_G(v) = k$. Let $\mathbf{A}_G = (a_{ij})$ be an $\mathbf{n} \times \mathbf{n}$ weighted adjacency matrix of $G$ defined as follows:
\begin{equation}\label{eqn:A}
	a_{ij} = \begin{cases}
		\dfrac{1}{q^2(m_i-1)(n_i-1)-1}	& \text{ if } i\sim j \text{ and } i,j \in K_{m_i,n_i},\\
		0 & \text{otherwise}
	\end{cases}
\end{equation}
and $\mathbf{B}_G = (b_{ij})$ be an $\mathbf{n} \times \mathbf{n}$ weighted adjacency matrix of $\bar{G}$, the complement of $G$ defined as follows:
\begin{equation}\label{eqn:B}
	b_{ij} = \begin{cases}
		\dfrac{n_i-1}{q^2(m_i-1)(n_i-1)-1}	& \text{ if } i \neq j, i \not\sim j \text{ and } i,j \in X_i,\\
		\dfrac{m_i-1}{q^2(m_i-1)(n_i-1)-1}	& \text{ if } i \neq j, i \not\sim j \text{ and } i,j \in Y_i,\\
		0 & \text{otherwise}.
	\end{cases}
\end{equation}

\subsection{Notations used in induction}\label{sec:ind}
The induction on the number of blocks plays a significant role in our analysis. 
The relation between a bi-block graph with $r$ blocks and a subgraph, which is also a bi-block graph but with $r-1$ blocks, is crucial for our proofs. 
During the application of induction, we will employ the notations introduced below:

As above, assume that $G$ is an $\mathbf{n}$-vertex bi-block graph with $r$ blocks $K_{m_i,n_i}$, $i=1,2,\ldots,r$, where the vertex partition of each block is $X_i \cup Y_i$, with $|X_i| = m_i$ and $|Y_i| = n_i$. 
Without loss of generality, we take $K_{m_r,n_r} = K_{s,t}$ to be a leaf block of $G$ (the block containing exactly one vertex of block degree at least $2$), \textit{i.e.}, $m_r = s$ and $n_r = t$. Let $H$ be the subgraph of $G$ induced by 
\[
V(K_{m_1,n_1}) \cup V(K_{m_2,n_2}) \cup \cdots \cup V(K_{m_{r-1},n_{r-1}}),
\]
and denote $|V(H)| = \mathbf{m}$. 
Thus, $\mathbf{n} = \mathbf{m} + (m_r + n_r - 1)$. Let $v_{\mathbf{m}}$ be the cut-vertex of the leaf block $K_{m_r,n_r} = K_{s,t}$, where $v_{\mathbf{m}} \in X_r$. 
From this point onward, we shall adopt the notations $\mathscr{D}, \mathbf{A}, \mathbf{B}, \mathbf{x}$, and $\mathbf{y}$ for the bi-block graph $G$, and the corresponding notations $\hat{\mathscr{D}}, \hat{\mathbf{A}}, \hat{\mathbf{B}}, \hat{\mathbf{x}}$, and $\hat{\mathbf{y}}$ for its subgraph $H$.

If $v_i \neq v_{\mathbf{m}}$ be a vertex in $H$ and $v_j \neq v_{\mathbf{m}}$ be a vertex in $K_{m_r,n_r} = K_{s,t} $, then we have
$$
d(v_i,v_j) = \begin{cases}
	d(v_i,v_{\mathbf{m}}) + 2 	& \text{ if } v_j \in X_r,\\
	d(v_i,v_{\mathbf{m}}) + 1	& \text{ if } v_j \in Y_r.
\end{cases}
$$ Thus, in the case of $q$-distance matrix, we have
$$
\mathscr{D}_{ij} = \begin{cases}
	1+q+\cdots+q^{d(v_i,v_{\mathbf{m}}) + 2} = 1+q+q^2 \hat{\mathscr{D}}_{i \mathbf{m}} 	& \text{ if } v_j \in X_r,\\
	1+q+\cdots+q^{d(v_i,v_{\mathbf{m}}) + 1} = 1+q \hat{\mathscr{D}}_{i \mathbf{m}} 	& \text{ if } v_j \in Y_r.
\end{cases}
$$
Let $E_0$, $E_1$, and $E_2$ denote the $\mathbf{m} \times \mathbf{m}$, $\mathbf{m} \times (s-1)$, and $\mathbf{m} \times t$ matrices, respectively, whose last row consists entirely of ones and all remaining entries are zero. Define $E_{\mathbf{m}\mathbf{m}}$ as the $\mathbf{m} \times \mathbf{m}$ matrix with a single nonzero entry equal to $1$ at the $(\mathbf{m},\mathbf{m})$ position. With these conventions, the matrices $\mathscr{D}$, $\mathbf{A}$, and $\mathbf{B}$ can be expressed in the following block form:
$$
\mathscr{D} = \begin{bmatrix}
	\hat{\mathscr{D}} & (q+1)\mathbf{J}_{\mathbf{m} \times (s-1)} + q^2 \hat{\mathscr{D}} E_1 & \mathbf{J}_{\mathbf{m} \times t} + q \hat{\mathscr{D}} E_2\\
	(q+1)\mathbf{J}_{(s-1) \times \mathbf{m}} + q^2 E_1^T \hat{\mathscr{D}} & (q+1) (\mathbf{J}_{s-1} - \mathbf{I}_{s-1}) & \mathbf{J}_{(s-1)\times t}\\
	\mathbf{J}_{t \times \mathbf{m}} +q E_2^T \hat{\mathscr{D}} & \mathbf{J}_{t \times (s-1)} & (q+1) (\mathbf{J}_{t} - \mathbf{I}_{t})
\end{bmatrix},
$$
$$
\mathbf{A} = \begin{bmatrix}
	\hat{\mathbf{A}} & \mathbf{0}_{\mathbf{m} \times (s-1)} & \dfrac{1}{q^2(s-1)(t-1)-1} E_2\\
	 \mathbf{0}_{(s-1) \times \mathbf{m}} & \mathbf{0}_{s-1} & \dfrac{1}{q^2(s-1)(t-1)-1} \mathbf{J}_{(s-1)\times t}\\
	 \dfrac{1}{q^2(s-1)(t-1)-1} E_2^T & \dfrac{1}{q^2(s-1)(t-1)-1} \mathbf{J}_{t \times (s-1)} & \mathbf{0}_t
\end{bmatrix},
$$
and
$$
\mathbf{B} = \begin{bmatrix}
	\hat{\mathbf{B}} & \dfrac{t-1}{q^2(s-1)(t-1)-1} E_1  & \mathbf{0}_{\mathbf{m} \times t}\\
	\dfrac{t-1}{q^2(s-1)(t-1)-1} E_1^T  & \dfrac{t-1}{q^2(s-1)(t-1)-1} (\mathbf{J}_{s-1} - \mathbf{I}_{s-1}) &  \mathbf{0}_{(s-1)\times t}\\
	\mathbf{0}_{t \times \mathbf{m}} & \mathbf{0}_{t \times (s-1)} & \dfrac{s-1}{q^2(s-1)(t-1)-1} (\mathbf{J}_{t} - \mathbf{I}_{t})
\end{bmatrix}.
$$
Next, we express $\mathbf{x}$ and $\mathbf{y}$ in terms of $\hat{\mathbf{x}}$ and $\hat{\mathbf{y}}$:
\begin{equation}\label{eqn:x}
	\mathbf{x}(v) = \begin{cases}
		\hat{\mathbf{x}}(v)	& \text{ if } v \in H \text{ and } v \neq v_{\mathbf{m}},\\
		\hat{\mathbf{x}}(v_{\mathbf{m}}) +  \dfrac{q(t-1)-1}{(q+1)(q^2(s-1)(t-1)-1)} -1	& \text{ if }  v = v_{\mathbf{m}},\\
		\dfrac{q(t-1)-1}{(q+1)(q^2(s-1)(t-1)-1)} 	& \text{ if } v \in X_r \text{ and } v \neq v_{\mathbf{m}},\\
		\dfrac{q(s-1)-1}{(q+1)(q^2(s-1)(t-1)-1)}	& \text{ if } v \in Y_r,
	\end{cases}
\end{equation}
and
\begin{equation}\label{eqn:y}
	\mathbf{y}(v) = \begin{cases}
		\hat{\mathbf{y}}(v)	& \text{ if } v \in H \text{ and } v \neq v_{\mathbf{m}},\\
		\hat{\mathbf{y}}(v_{\mathbf{m}}) +  \dfrac{t-1}{q^2(s-1)(t-1)-1} -1	& \text{ if }  v = v_{\mathbf{m}},\\
		\dfrac{t-1}{q^2(s-1)(t-1)-1} 	& \text{ if } v \in X_r \text{ and } v \neq v_{\mathbf{m}},\\
		\dfrac{s-1}{q^2(s-1)(t-1)-1}	& \text{ if } v \in Y_r.
	\end{cases}
\end{equation}
Before proceeding further we state some basic properties without proof that will be used often in the future proof.
\begin{rem}\label{rem:identities}
Let $E_1, E_2$ and $\mathbf{J}$ be the matrices as defined before, then the following identities hold:
	\begin{itemize}
		\item[1.] $E_1 \mathbf{J}_{s-1} = (s-1)E_1, E_2 \mathbf{J}_{t} = t E_2.$
		\item[2.] $E_1 \mathbf{J}_{(s-1) \times t} = (s-1) E_2, E_2 \mathbf{J}_{t \times (s-1)} = t E_1.$
		\item[3.] $\mathbf{J}_{(s-1) \times \mathbf{m}}E_{\mathbf{m} \mathbf{m}} = E_1^T, \mathbf{J}_{t \times \mathbf{m}}E_{\mathbf{m} \mathbf{m}} = E_2^T$.
	\end{itemize}
\end{rem}

\section{Determinant and Inverse of $\mathscr{D}(K_{s,t})$}
We start the section by finding the determinant of the $q$-distance matrix of the complete bipartite graph $K_{s,t}$.

\begin{theorem}\label{thm:D-det-single}
	Let $G$ be the complete bipartite graph $K_{s,t}$ and $\mathscr{D}(K_{s,t})$ be the $q$-distance matrix of $G$. Let $q \neq -1$, then 
	$$\det(\mathscr{D}(K_{s,t}))= (-1)^{s+t-2}(q+1)^{s+t-2} \left[(q+1)^2(s-1)(t-1)-st\right].$$
\end{theorem}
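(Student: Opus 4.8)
The plan is to write $\mathscr{D}(K_{s,t})$ explicitly in $2\times 2$ block form and reduce its determinant to scalar quantities using the two tools already in hand, Proposition~\ref{prop:detblock} and Lemma~\ref{lem:aI+bJ}. First I would order the vertices placing the $s$ vertices of $X$ first and the $t$ vertices of $Y$ last. Since any two vertices in the same part are at distance $2$ and any two vertices in opposite parts are at distance $1$, and since $[1]=1$ while $[2]=1+q$, the $q$-distance matrix is
\[
\mathscr{D}(K_{s,t}) = \begin{bmatrix} (q+1)(\mathbf{J}_s - \mathbf{I}_s) & \mathbf{J}_{s\times t}\\ \mathbf{J}_{t\times s} & (q+1)(\mathbf{J}_t - \mathbf{I}_t)\end{bmatrix}
= \begin{bmatrix} A & B\\ B^T & D\end{bmatrix},
\]
with $A=(q+1)(\mathbf{J}_s-\mathbf{I}_s)$, $B=\mathbf{J}_{s\times t}=\mathbf{1}_s\mathbf{1}_t^T$, and $D=(q+1)(\mathbf{J}_t-\mathbf{I}_t)$. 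Each diagonal block has the shape $a\mathbf{I}+b\mathbf{J}$ with $a=-(q+1)$, $b=q+1$, which is precisely the form Lemma~\ref{lem:aI+bJ} handles.

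Next, assuming $s\ge 2$ so that $A$ is invertible whenever $q\ne -1$ (indeed $a+sb=(q+1)(s-1)\ne 0$), I would apply Proposition~\ref{prop:detblock} to get $\det\mathscr{D}=\det(A)\det(D-B^TA^{-1}B)$. Lemma~\ref{lem:aI+bJ} supplies $\det A=(-(q+1))^{s-1}(q+1)(s-1)$ together with $A^{-1}=\tfrac{-1}{q+1}\bigl(\mathbf{I}_s-\tfrac{1}{s-1}\mathbf{J}_s\bigr)$. Because $B=\mathbf{1}_s\mathbf{1}_t^T$ is rank one, the Schur complement collapses to $D-B^TA^{-1}B = D-(\mathbf{1}_s^TA^{-1}\mathbf{1}_s)\,\mathbf{J}_t$, and a one-line computation gives $\mathbf{1}_s^TA^{-1}\mathbf{1}_s=\tfrac{s}{(q+1)(s-1)}$. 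Hence the Schur complement is again of the form $a'\mathbf{I}_t+b'\mathbf{J}_t$ with $a'=-(q+1)$ and $b'=(q+1)-\tfrac{s}{(q+1)(s-1)}$, so Lemma~\ref{lem:aI+bJ} applies a second time to evaluate $\det(D-B^TA^{-1}B)=(a')^{t-1}(a'+tb')$. Multiplying the two determinants and clearing the single factor of $(q+1)$ in the denominator produces exactly $(-1)^{s+t-2}(q+1)^{s+t-2}\bigl[(q+1)^2(s-1)(t-1)-st\bigr]$.

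The only genuine subtlety, and the step I would treat most carefully, is the degenerate case where a part is a singleton, i.e. $s=1$ or $t=1$: then the corresponding diagonal block is the $1\times 1$ zero matrix and the Schur reduction above is unavailable. One option is to dispatch it by symmetry—swapping the roles of $X$ and $Y$ so the invertible block is the one used, which covers every $K_{1,t}$ with $t\ge 2$—and to evaluate $K_{1,1}$ directly as a $2\times 2$ determinant. A cleaner alternative that avoids all case analysis is to note that $\mathscr{D}(K_{s,t})=-(q+1)\mathbf{I}_{s+t}+PCP^T$, where $P=[\,p_1\ p_2\,]$ with $p_1=\begin{bmatrix}\mathbf{1}_s\\\mathbf{0}\end{bmatrix}$, $p_2=\begin{bmatrix}\mathbf{0}\\\mathbf{1}_t\end{bmatrix}$ and $C=\begin{bmatrix}q+1&1\\1&q+1\end{bmatrix}$, and then to apply the matrix determinant lemma, giving $\det\mathscr{D}=(-(q+1))^{s+t}\det\bigl(\mathbf{I}_2+C\,P^T(-(q+1)\mathbf{I})^{-1}P\bigr)$. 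Since $P^TP=\mathrm{diag}(s,t)$, the inner $2\times 2$ determinant is $(1-s)(1-t)-\tfrac{st}{(q+1)^2}$, and as $(-1)^{s+t}=(-1)^{s+t-2}$ this reproduces the stated formula uniformly for all $s,t\ge 1$. Either way, what remains past the structural setup is only routine algebraic simplification to match the target expression.
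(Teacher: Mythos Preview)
Your primary argument is essentially identical to the paper's: the same $2\times 2$ block form, the same Schur complement via Proposition~\ref{prop:detblock}, the same inversion of $A$ by Lemma~\ref{lem:aI+bJ}, and the same final algebra. You are in fact slightly more careful than the paper, which silently assumes $s\ge 2$ when inverting $A$; your treatment of the $s=1$ (or $t=1$) degeneracy, and the alternative rank-two perturbation argument via the matrix determinant lemma, are sound additions that do not appear in the paper.
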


\begin{proof}
	The $q$-distance matrix of $K_{s,t}$ has the following block form:
	$$
	\mathscr{D}(K_{s,t}) = \begin{bmatrix}
		(q+1)(\mathbf{J}_{s} -\mathbf{I}_{s}) &  \mathbf{J}_{s\times t}\\
		 \mathbf{J}_{t \times s} & (q+1)(\mathbf{J}_{t} - \mathbf{I}_{t})
	\end{bmatrix}.
	$$ Next, we use Proposition~\ref{prop:detblock} to find the $\det(\mathscr{D})$. Since, $q+1 \neq 0$, using Lemma~\ref{lem:aI+bJ}, we have
	\[
	\left[(q+1)(\mathbf{J}_{s} - \mathbf{I}_{s})\right]^{-1} = \frac{1}{q+1}\left(\frac{1}{s-1}\mathbf{J}_s - \mathbf{I}_s\right) .
	\]
	Thus, 
	\begin{equation*} 
		\begin{split}
			\det(\mathscr{D}(K_{s,t})) & = \det((q+1)(\mathbf{J}_{s} - \mathbf{I}_{s})) \times \det \left[(q+1)(\mathbf{J}_{t} - \mathbf{I}_{t}) -  \mathbf{J}_{t \times s} \times \left((q+1)(\mathbf{J}_{s} - \mathbf{I}_{s}) \right)^{-1} \times  \mathbf{J}_{s\times t} \right]\\
			& = \det(q^2 \mathbf{J}_{s} - (q^2-1) \mathbf{I}_{s}) \times \det \left[(q+1)(\mathbf{J}_{t} - \mathbf{I}_{t}) -  \mathbf{J}_{t \times s} \times \left(\frac{1}{q+1}\left(\frac{1}{s-1}\mathbf{J}_s - \mathbf{I}_s\right) \right) \times \mathbf{J}_{s\times t} \right]\\
			& = \det(q^2 \mathbf{J}_{s} - (q^2-1) \mathbf{I}_{s}) \times \det \left[(q+1)(\mathbf{J}_{t} - \mathbf{I}_{t}) -   \frac{s}{(q+1)(s-1)}\mathbf{J}_t \right]\\
			& = \det(q^2 \mathbf{J}_{s} - (q^2-1) \mathbf{I}_{s}) \times \det \left[\left(q+1-\frac{s}{(q+1)(s-1)}\right)\mathbf{J}_t - (q+1)\mathbf{I}_{t} \right]\\
			&= (-1)^{s-1}(s-1)(q+1)^s \times (-1)^{t-1} (q+1)^{t-1} \left[ (q+1)t - \frac{st}{(q+1)(s-1)} - (q+1)\right]\\
			&= (-1)^{s+t-2} (q+1)^{s+t-2} \left[ (q+1)^2(s-1)(t-1) - st\right].
		\end{split}
	\end{equation*}
\end{proof}

\begin{rem}
	Note that the determinant of the $q$-distance matrix of the complete bipartite graph $K_{s,t}$ is non-zero if and only if $q \ne 1$ and $(q+1)^2(s-1)(t-1) \ne st$. It was proved by Hou and Sun in~\cite{Hou3}, that the determinant of the distance matrix of the complete bipartite graph $K_{s,t}$ is $(-2)^{s+t}(3st-4s-4t+4)$, which matches with our result when we substitute $q=1$.
\end{rem}

Next, we find the cofactor at the $(1,1)$ position of the matrix $\mathscr{D}(K_{s,t})$. Using the definition of $M$ from Equation~\eqref{eqn:D-M}, we have the following matrix for the case when $G$ is the complete bipartite graph $K_{s,t}$:
\[
M(K_{s,t}) = \begin{bmatrix}[cc]
	[4]\mathbf{J}_s & [3]\mathbf{J}_{s \times t}\\
	[3]\mathbf{J}_{t \times s} & [2]\mathbf{J}_t
\end{bmatrix}
\]
and hence the matrix $(\mathscr{D}_1 - M)(K_{s,t})$ is given by
\[
(\mathscr{D}_1 - M)(K_{s,t}) = \begin{bmatrix}[cc]
	-q^2(q+1)\mathbf{J}_{s-1} - (q+1)\mathbf{I}_{s-1} & -q(q+1)\mathbf{J}_{(s-1) \times t}\\
	-q(q+1) \mathbf{J}_{t \times (s-1)} & -(q+1)\mathbf{I}_t
\end{bmatrix}.
\]
In the next Lemma, we find \( \xi(\mathscr{D}((K_{s,t}))) \) for the complete bipartite graph $K_{s,t}$ using the matrix $(\mathscr{D}_1 - M)(K_{s,t})$.

\begin{lem}\label{lem:cof-single}
	Let $G$ be the complete bipartite graph $K_{s,t}$ and $\mathscr{D}(K_{s,t})$ be the $q$-distance matrix of $G$. Let $q \neq -1$, then 
	\[
	\xi(\mathscr{D}(K_{s,t})) = (-1)^{s+t-1} (q+1)^{s+t-1} \left[q^2(s-1)(t-1)-1\right].
	\]
\end{lem}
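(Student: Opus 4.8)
The plan is to evaluate $\xi(\mathscr{D}(K_{s,t}))$ directly from its definition~\eqref{eqn:D-M}, namely $\xi(\mathscr{D}(K_{s,t})) = \det\big((\mathscr{D}_1-M)(K_{s,t})\big)$, using the block matrix $(\mathscr{D}_1-M)(K_{s,t})$ already exhibited above. Write it in the form
\[
(\mathscr{D}_1-M)(K_{s,t}) = \begin{bmatrix} P & Q \\ Q^{T} & S \end{bmatrix},
\]
with $P = -q^2(q+1)\mathbf{J}_{s-1} - (q+1)\mathbf{I}_{s-1}$, $Q = -q(q+1)\mathbf{J}_{(s-1)\times t}$, and $S = -(q+1)\mathbf{I}_t$. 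The key observation is that the lower-right block $S$ is invertible because $q \neq -1$, with $S^{-1} = -\tfrac{1}{q+1}\mathbf{I}_t$. This reduces the entire computation to the Schur-complement determinant formula of Proposition~\ref{prop:detblock} followed by a single application of Lemma~\ref{lem:aI+bJ}.

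Concretely, I would apply Proposition~\ref{prop:detblock} in the form $\det\big((\mathscr{D}_1-M)(K_{s,t})\big) = \det(S)\,\det\big(P - Q S^{-1} Q^{T}\big)$. The factor $\det(S) = \big(-(q+1)\big)^{t}$ is immediate. For the Schur complement, the one computation that does anything is collapsing the product of the two rank-one off-diagonal blocks via $\mathbf{J}_{(s-1)\times t}\mathbf{J}_{t\times(s-1)} = t\,\mathbf{J}_{s-1}$, which gives $Q S^{-1} Q^{T} = -q^2(q+1)t\,\mathbf{J}_{s-1}$ and hence
\[
P - Q S^{-1} Q^{T} = q^2(q+1)(t-1)\,\mathbf{J}_{s-1} - (q+1)\,\mathbf{I}_{s-1}.
\]
This is exactly of the form $a\mathbf{I}_{s-1} + b\mathbf{J}_{s-1}$ with $a = -(q+1) \neq 0$ and $b = q^2(q+1)(t-1)$, so Lemma~\ref{lem:aI+bJ} gives its determinant as $a^{s-2}\big(a + (s-1)b\big)$, where the scalar $a + (s-1)b = (q+1)\big[q^2(s-1)(t-1)-1\big]$ carries all the nontrivial content.

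Multiplying the two factors $\det(S)$ and $\det\big(P - Q S^{-1} Q^{T}\big)$ and collecting terms then produces the closed form: the powers of $(q+1)$ accumulate to $(q+1)^{s+t-1}$, the bracket $\big[q^2(s-1)(t-1)-1\big]$ survives intact, and the remaining factor is a power of $-1$. I expect no conceptual obstacle anywhere in this argument, since the matrix is handed to us in a form tailor-made for Proposition~\ref{prop:detblock} and Lemma~\ref{lem:aI+bJ}; the only step genuinely requiring care is the bookkeeping of signs and of the powers of $(q+1)$ coming out of $\big(-(q+1)\big)^{t}$ and of $a^{s-2}$ in Lemma~\ref{lem:aI+bJ}. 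As a cross-check on exactly this bookkeeping, one may instead factor $-(q+1)$ out of every row of $(\mathscr{D}_1-M)(K_{s,t})$ at the outset — every entry is divisible by it — reducing the determinant to $\big(-(q+1)\big)^{s+t-1}$ times the determinant of an explicit $q$-polynomial matrix, which again is evaluated by one Schur step and Lemma~\ref{lem:aI+bJ}; the two routes must agree on the final sign, which is the subtle quantity to pin down here.
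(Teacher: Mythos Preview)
Your proposal is correct and follows essentially the same route as the paper's own proof: take the Schur complement of the invertible block $S=-(q+1)\mathbf{I}_t$ in $(\mathscr{D}_1-M)(K_{s,t})$ via Proposition~\ref{prop:detblock}, obtain $q^2(q+1)(t-1)\mathbf{J}_{s-1}-(q+1)\mathbf{I}_{s-1}$, and finish with Lemma~\ref{lem:aI+bJ}. Your suggested cross-check of pulling out the common factor $-(q+1)$ from every row is a sensible safeguard for exactly the sign bookkeeping you flag as delicate.
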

\begin{proof}
	Since, \( \xi(\mathscr{D}(K_{s,t})) = \det((\mathscr{D}_1 - M)(K_{s,t})) \), we use the Schur complement (Proposition~\ref{prop:detblock}) to find the determinant. First, we find the Schur complement of $-(q+1)\mathbf{I}_t$ in $(\mathscr{D}_1 - M)(K_{s,t})$, which is given by
	\begin{equation*} 
		\begin{split}
			&\quad -q^2(q+1)\mathbf{J}_{s-1} - (q+1)\mathbf{I}_{s-1} + q(q+1)\mathbf{J}_{(s-1) \times t} \times \left((q+1)\mathbf{I}_{t} \right)^{-1} \times  q(q+1) \mathbf{J}_{t \times (s-1)}\\
			&=-q^2(q+1)\mathbf{J}_{s-1} - (q+1)\mathbf{I}_{s-1} +  q(q+1)\mathbf{J}_{(s-1) \times t} \times \frac{1}{q+1}\mathbf{I}_{t}  \times  q(q+1) \mathbf{J}_{t \times (s-1)}\\
			&=-q^2(q+1)\mathbf{J}_{s-1} - (q+1)\mathbf{I}_{s-1} + q^2(q+1)t \mathbf{J}_{s-1}\\
			&= q^2(q+1)(t-1)\mathbf{J}_{s-1} - (q+1)\mathbf{I}_{s-1}.
		\end{split}
	\end{equation*}
	Thus,	
	\begin{equation*} 
		\begin{split}
			\xi(\mathscr{D}(K_{s,t}))
			& = \det(-(q+1)\mathbf{I}_{t}) \times \det \left[q^2(q+1)(t-1)\mathbf{J}_{s-1} - (q+1)\mathbf{I}_{s-1}  \right]\\
			&= (-1)^t (q+1)^t \times (q+1)^{s-1} \det \left[q^2(t-1)\mathbf{J}_{s-1} -\mathbf{I}_{s-1}  \right]\\
			&= (-1)^t (q+1)^t \times (q+1)^{s-1} (-1)^{s-1} \left[q^2(s-1)(t-1)-1\right]\\
			&= (-1)^{s+t-1} (q+1)^{s+t-1} \left[q^2(s-1)(t-1)-1\right].
		\end{split}
	\end{equation*}
	Hence the result is proved.
\end{proof}

\begin{rem}
	Note that the cofactor at the $(1,1)$ position of the matrix $\mathscr{D}(K_{s,t})$ is nonzero if and only if $q \ne 1$ and $q^2(s-1)(t-1) \ne 1$, \textit{i.e.} the indeterminate $q$ satisfies Condition~\ref{con}.
\end{rem}

We complete the section with the expression for the inverse of the $q$-distance matrix of the complete bipartite graph $K_{s,t}$.

\begin{theorem}\label{thm:D-inv-single}
	Let $G$ be the complete bipartite graph $K_{s,t}$ and $\mathscr{D}(K_{s,t})$ be the $q$-distance matrix of $G$. Let $q$ be an indeterminate such that it satisfies Condition~\ref{con-1}, then 
	\begin{equation*} 
		\begin{split}
			\mathscr{D}(K_{s,t})^{-1} & = \frac{1}{(q+1)\left[(q+1)^2(s-1)(t-1)-st\right]}
			\begin{bmatrix}[cc]
				((q+1)^2(t-1)-t)\mathbf{J}_s & -(q+1)\mathbf{J}_{s \times t}\\
				-(q+1)\mathbf{J}_{t \times s} & ((q+1)^2(s-1)-s)\mathbf{J}_t
			\end{bmatrix}\\
			&\quad - \frac{1}{q+1} \mathbf{I}_{s+t}.
		\end{split}
	\end{equation*}
\end{theorem}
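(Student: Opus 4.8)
The plan is to apply the Schur--complement inverse formula (Proposition~\ref{prop:schur}) to the same block decomposition of $\mathscr{D}(K_{s,t})$ already used in the proof of Theorem~\ref{thm:D-det-single}, namely with
\[
A = (q+1)(\mathbf{J}_s - \mathbf{I}_s), \quad B = \mathbf{J}_{s \times t}, \quad C = \mathbf{J}_{t \times s}, \quad D = (q+1)(\mathbf{J}_t - \mathbf{I}_t).
\]
Since both the matrix and the claimed formula are symmetric under interchanging the two colour classes (swapping $s \leftrightarrow t$ together with the corresponding block swap), I may assume $s \ge 2$ whenever $\max\{s,t\} \ge 2$; the residual case $s=t=1$ is the $2\times 2$ matrix $\mathscr{D}(K_{1,1})$, which I would check directly against the formula. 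Under $s\ge 2$, Lemma~\ref{lem:aI+bJ} applies to $A$ and gives $A^{-1} = \tfrac{1}{q+1}\big(\tfrac{1}{s-1}\mathbf{J}_s - \mathbf{I}_s\big)$. I abbreviate $\Delta = (q+1)^2(s-1)(t-1) - st$, the quantity governing $\det \mathscr{D}(K_{s,t})$.

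Next I would reuse the computation already carried out for the determinant, which shows that the Schur complement of $A$ is
\[
S = D - CA^{-1}B = \Big(q+1 - \tfrac{s}{(q+1)(s-1)}\Big)\mathbf{J}_t - (q+1)\mathbf{I}_t.
\]
Applying Lemma~\ref{lem:aI+bJ} once more, $S$ is invertible precisely when $\Delta \ne 0$, i.e.\ exactly under Condition~\ref{con-1}, and one obtains $S^{-1} = \gamma\,\mathbf{J}_t - \tfrac{1}{q+1}\mathbf{I}_t$ with $\gamma = \frac{(q+1)^2(s-1)-s}{(q+1)\Delta}$. Alongside this I would record the two mixed products, computed using only $A^{-1}$ and the rule $\mathbf{J}_{m\times n}\mathbf{J}_{n\times p} = n\,\mathbf{J}_{m\times p}$: namely $A^{-1}B = \frac{1}{(q+1)(s-1)}\mathbf{J}_{s\times t}$ and, by symmetry, $CA^{-1} = \frac{1}{(q+1)(s-1)}\mathbf{J}_{t\times s}$.

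With these ingredients I would substitute into the four blocks prescribed by Proposition~\ref{prop:schur}. The bottom-right block is simply $S^{-1}$, already in the claimed form. The off-diagonal block $-A^{-1}B S^{-1}$ reduces, after multiplying $\mathbf{J}_{s\times t}$ against $S^{-1}$ and using $\gamma t - \tfrac{1}{q+1} = \frac{(q+1)(s-1)}{\Delta}$, to $-\tfrac{1}{\Delta}\mathbf{J}_{s\times t} = \frac{-(q+1)}{(q+1)\Delta}\mathbf{J}_{s\times t}$, matching the stated entry. Finally the top-left block $A^{-1} + A^{-1}B S^{-1} C A^{-1}$ adds a multiple of $\mathbf{J}_s$ to $A^{-1}$, and collecting the coefficient of $\mathbf{J}_s$ turns it into $\alpha\,\mathbf{J}_s - \tfrac{1}{q+1}\mathbf{I}_s$ with $\alpha = \frac{(q+1)^2(t-1)-t}{(q+1)\Delta}$.

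The work is essentially routine once the block pieces are in hand; the only genuine obstacle is the algebraic bookkeeping needed to see that the various numerators collapse into the advertised closed form. The two identities that drive this are $t\big[(q+1)^2(s-1)-s\big] - \Delta = (q+1)^2(s-1)$ (used for the off-diagonal and bottom-right blocks) and $\Delta + t = (s-1)\big[(q+1)^2(t-1)-t\big]$ (used to simplify the coefficient of $\mathbf{J}_s$ in the top-left block). A fully computation-free alternative, which also sidesteps the invertibility of $A$ and hence the $s=1$ caveat, is to take the displayed matrix $P$ as an ansatz and verify $\mathscr{D}(K_{s,t})\,P = \mathbf{I}_{s+t}$ directly by block multiplication, again using only $\mathbf{J}_{m\times n}\mathbf{J}_{n\times p} = n\,\mathbf{J}_{m\times p}$; this collapses the proof into four short block identities.
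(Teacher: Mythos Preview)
Your proposal is correct and follows essentially the same route as the paper: both apply the Schur--complement inverse formula (Proposition~\ref{prop:schur}) to the same $2\times 2$ block partition, invert $A$ and the Schur complement $S$ via Lemma~\ref{lem:aI+bJ}, reuse the expression for $S$ from Theorem~\ref{thm:D-det-single}, and then assemble and simplify the four blocks. Your extra remarks on the $s\leftrightarrow t$ symmetry to cover $s=1$ and the alternative direct verification $\mathscr{D}(K_{s,t})\,P=\mathbf{I}$ are nice bonuses the paper does not include, but the core argument is the same.
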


\begin{proof}
	The $q$-distance matrix of $K_{s,t}$ has the following block form:
	$$
	\mathscr{D}(K_{s,t}) = \begin{bmatrix}
		(q+1)(\mathbf{J}_{s} -\mathbf{I}_{s}) &  \mathbf{J}_{s\times t}\\
		\mathbf{J}_{t \times s} & (q+1)(\mathbf{J}_{t} - \mathbf{I}_{t})
	\end{bmatrix} = \begin{bmatrix}
	A &  B\\
	C & D
	\end{bmatrix} (say).
	$$
	We use the Schur complement technique (Proposition~\ref{prop:schur}) to find the inverse of $\mathscr{D}(K_{s,t})$. Since $A$ is invertible, the inverse is given by
	\[
	A^{-1} = \frac{1}{q+1} \left( \frac{1}{s-1} \mathbf{J}_s - \mathbf{I}_s\right)
	\]
	and from the calculations from Theorem~\ref{thm:D-det-single}, we have
	\[
	D-CA^{-1}B = \left(q+1-\frac{s}{(q+1)(s-1)}\right)\mathbf{J}_t - (q+1)\mathbf{I}_{t}.
	\]
	Next, using Lemma~\ref{lem:aI+bJ}, we have 
	\[
	(D-CA^{-1}B)^{-1} = \frac{(q+1)^2(s-1)-s}{(q+1)\left[(q+1)^2(s-1)(t-1)-st\right]}\mathbf{J}_t - \frac{1}{q+1}\mathbf{I}_t.
	\]
	Next, we compute
	\begin{equation*}
		\begin{split}
			(D-CA^{-1}B)^{-1} C A^{-1} &= (D-CA^{-1}B)^{-1} \times \mathbf{J}_{t \times s} \times \frac{1}{q+1} \left( \frac{1}{s-1} \mathbf{J}_s - \mathbf{I}_s\right)\\
			&= (D-CA^{-1}B)^{-1} \times \frac{1}{q+1} \left( \frac{s}{s-1} - 1\right)\mathbf{J}_{t \times s}\\
			&= \left[\frac{(q+1)^2(s-1)-s}{(q+1)\left[(q+1)^2(s-1)(t-1)-st\right]}\mathbf{J}_t - \frac{1}{q+1}\mathbf{I}_t\right] \times \frac{1}{(q+1)(s-1)} \mathbf{J}_{t \times s}\\
			&= \frac{1}{(q+1)^2(s-1)} \left[\frac{(q+1)^2(s-1)t-st}{(q+1)^2(s-1)(t-1)-st} - 1 \right]\mathbf{J}_{t \times s}\\
			&= \frac{1}{(q+1)^2(s-1)} \left[\frac{(q+1)^2(s-1)}{(q+1)^2(s-1)(t-1)-st} \right]\mathbf{J}_{t \times s}\\
			&= \frac{1}{(q+1)^2(s-1)(t-1)-st} \mathbf{J}_{t \times s}.
		\end{split}
	\end{equation*}
	Similarly, one can compute $A^{-1} B (D-CA^{-1}B)^{-1}$ and obtain 
	\[
	A^{-1} B (D-CA^{-1}B)^{-1} = \frac{1}{(q+1)^2(s-1)(t-1)-st} \mathbf{J}_{s \times t}.
	\]
	Finally using the above calculations we compute
	\begin{equation*}
		\begin{split}
			&A^{-1} + A^{-1} B (D-CA^{-1}B)^{-1} C A^{-1} \\
			&=  A^{-1} + \frac{1}{(q+1)^2(s-1)(t-1)-st} \mathbf{J}_{s \times t} \times \mathbf{J}_{t \times s} \times \frac{1}{q+1} \left( \frac{1}{s-1} \mathbf{J}_s - \mathbf{I}_s\right)\\
			&=  A^{-1} + \frac{1}{(q+1)^2(s-1)(t-1)-st} \times \frac{t}{(q+1)(s-1)} \mathbf{J}_s\\
			&= \frac{1}{q+1} \left( \frac{1}{s-1} \mathbf{J}_s - \mathbf{I}_s\right) + \frac{1}{(q+1)^2(s-1)(t-1)-st} \times \frac{t}{(q+1)(s-1)} \mathbf{J}_s\\
			&= \frac{t}{(q+1)(s-1)}  \left[ 1+ \frac{t}{(q+1)^2(s-1)(t-1)-st} \right]\mathbf{J}_s - \frac{1}{q+1}\mathbf{I}_s\\
			&= \frac{(q+1)^2(t-1)-t}{(q+1)\left[(q+1)^2(s-1)(t-1)-st\right]} \mathbf{J}_s - \frac{1}{q+1}\mathbf{I}_s.
		\end{split}
	\end{equation*}
	Therefore, by combining the above steps, the required result follows.
\end{proof}

\section{Some Useful Lemmas}
In this section, we present and establish several auxiliary lemmas that will serve as key tools for the proofs in the subsequent section.
\begin{lem}\label{lem:q+1}
	Let $G$ be a bi-block graph with $\mathbf{n}$ vertices and $\mathscr{D}$ be the $q$-distance matrix of $G$. Let $q$ be an indeterminate that satisfies Condition~\ref{con} and $\mathbf{x}$ be the $\mathbf{n}$-dimensional vector as defined in Equation~\ref{eqn:x-0}, then
	\[
	\sum_{i=1}^{\mathbf{n}} \left( 1+q+(q^2-1)\mathscr{D}_{i \mathbf{n}}\right)\mathbf{x}_i = q+1.
	\]
\end{lem}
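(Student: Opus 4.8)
The plan is to first simplify the summand. Writing $\alpha = d(i,\mathbf{n})$ for the distance from vertex $i$ to vertex $\mathbf{n}$, the polynomial identity $(q-1)[\alpha] = q^{\alpha}-1$ (valid for $q$ an indeterminate, by telescoping) gives $(q^2-1)\mathscr{D}_{i\mathbf{n}} = (q+1)(q^{\alpha}-1)$, so that
\[
1 + q + (q^2-1)\mathscr{D}_{i\mathbf{n}} = (q+1)\,q^{\,d(i,\mathbf{n})}.
\]
Hence the claim is equivalent to $\sum_{i=1}^{\mathbf{n}} q^{\,d(i,\mathbf{n})}\,\mathbf{x}_i = 1$, which is the statement I would actually prove, by induction on the number of blocks $r$.

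For the base case $r=1$ we have $G = K_{s,t}$ with $\mathbf{n}\in Y$. Since every block degree equals $1$, the vector $\mathbf{x}$ takes only the two values $P := \frac{q(t-1)-1}{(q+1)(q^2(s-1)(t-1)-1)}$ on $X$ and $Q := \frac{q(s-1)-1}{(q+1)(q^2(s-1)(t-1)-1)}$ on $Y$, and the distances from $\mathbf{n}$ are $0$ (to itself), $1$ (to each of the $s$ vertices of $X$), and $2$ (to each of the remaining $t-1$ vertices of $Y$). The sum therefore collapses to $qsP + (1 + (t-1)q^2)Q$, and the whole problem rests on the single algebraic identity
\[
qs\big(q(t-1)-1\big) + \big(1 + (t-1)q^2\big)\big(q(s-1)-1\big) = (q+1)\big(q^2(s-1)(t-1)-1\big),
\]
after which the factor $(q+1)(q^2(s-1)(t-1)-1)$ cancels against the common denominator of $P$ and $Q$, leaving $1$.

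For the inductive step I would peel off the leaf block $K_{s,t}=K_{m_r,n_r}$ exactly as in Section~\ref{sec:ind}, so that its cut-vertex $v_{\mathbf{m}}$ is the last vertex of the $(r-1)$-block subgraph $H$ and the reference vertex $\mathbf{n}$ lies in $Y_r$. I split the sum over the three vertex groups of the block decomposition: the vertices of $H$, the $s-1$ new vertices of $X_r$, and the $t$ vertices of $Y_r$. The distance data needed, all coming from shortest paths passing through $v_{\mathbf{m}}$, are $d(i,\mathbf{n}) = d_H(i,v_{\mathbf{m}})+1$ for $i\in V(H)$, $d(i,\mathbf{n})=1$ for $i\in X_r\setminus\{v_{\mathbf{m}}\}$, and $d(i,\mathbf{n})=2$ for $i\in Y_r\setminus\{\mathbf{n}\}$. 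Substituting the piecewise description of $\mathbf{x}$ from Equation~\eqref{eqn:x}, the $V(H)$-contribution becomes $q\big(\sum_{i\in V(H)} q^{d_H(i,v_{\mathbf{m}})}\hat{\mathbf{x}}_i + (P-1)\big)$, where the correction $P-1$ is the modification of $\mathbf{x}$ at the cut-vertex $v_{\mathbf{m}}$ (weighted by $q^{d_H(v_{\mathbf{m}},v_{\mathbf{m}})}=1$). Applying the induction hypothesis to $H$, whose last vertex is precisely $v_{\mathbf{m}}$, replaces $\sum_{i\in V(H)} q^{d_H(i,v_{\mathbf{m}})}\hat{\mathbf{x}}_i$ by $1$, so the $H$-contribution reduces to $qP$. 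Adding $(s-1)qP$ from $X_r\setminus\{v_{\mathbf{m}}\}$ and $(1+(t-1)q^2)Q$ from $Y_r$, the total becomes $qsP + (1+(t-1)q^2)Q$, i.e.\ exactly the base-case expression, so the same identity finishes the proof.

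I expect the main obstacle to be the bookkeeping in the inductive step rather than any deep idea: one must correctly combine the piecewise formula for $\mathbf{x}$ in Equation~\eqref{eqn:x} (in particular the $-1$ and the extra fraction attached to the cut-vertex) with the three distance regimes, and check that the cut-vertex correction and the induction hypothesis assemble into the clean value $qP$. Condition~\ref{con} guarantees $q^2(s-1)(t-1)-1\neq 0$ and $q+1\neq 0$, so $P$, $Q$ and their analogues in the blocks of $H$ are well defined throughout, and $H$ inherits Condition~\ref{con} from $G$ so that the induction hypothesis applies.
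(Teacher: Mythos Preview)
Your proof is correct and follows the same inductive structure as the paper's proof: verify the base case $K_{s,t}$ directly, then peel off the leaf block $K_{s,t}$ with cut-vertex $v_{\mathbf m}$, split the sum over $V(H)$, $X_r\setminus\{v_{\mathbf m}\}$, and $Y_r$, and reduce to the induction hypothesis plus the base-case identity. The one genuine difference is your preliminary simplification
\[
1+q+(q^2-1)\mathscr D_{i\mathbf n}=(q+1)\,q^{\,d(i,\mathbf n)},
\]
which the paper never records; the paper instead substitutes the explicit values of $\mathscr D_{i\mathbf n}$ case by case (obtaining $q(q+1)$, $q^2(q+1)$, $q+1$ in the base case, etc.) and carries the factor $q+1$ through all the algebra. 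Your reduction to $\sum_i q^{d(i,\mathbf n)}\mathbf x_i=1$ makes both the base case and the inductive bookkeeping noticeably shorter, and in particular lets you recognize immediately that the inductive step collapses to the \emph{same} expression $qsP+(1+(t-1)q^2)Q$ as the base case, whereas the paper re-derives that identity in disguised form.
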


\begin{proof}
	We use induction on the number of blocks, $r$ to complete the proof. Let $r=1$ and $G=K_{s,t}$, then $\mathbf{n} = s+t$. First we break the summation into two parts
	\begin{equation*}
		\begin{split}
			&\quad \sum_{i=1}^{\mathbf{n}} \left( 1+q+(q^2-1)\mathscr{D}_{i \mathbf{n}}\right)\mathbf{x}_i\\
			&= \sum_{i=1}^{s} \left( 1+q+(q^2-1)\mathscr{D}_{i \mathbf{n}}\right)\mathbf{x}_i + \sum_{i=s+1}^{s+t} \left( 1+q+(q^2-1)\mathscr{D}_{i \mathbf{n}}\right)\mathbf{x}_i.
		\end{split}
	\end{equation*}
	Next, substituting the respective expressions of $\mathbf{x}_i$, we have
	\begin{equation*}
		\begin{split}
			&\quad \sum_{i=1}^{\mathbf{n}} \left( 1+q+(q^2-1)\mathscr{D}_{i \mathbf{n}}\right)\mathbf{x}_i\\
			&= \sum_{i=1}^{s} \left( 1+q+(q^2-1)\right) \dfrac{q(t-1)-1}{(q+1)(q^2(s-1)(t-1)-1)} \\
			&\quad + \sum_{i=s+1}^{s+t-1} \left( 1+q+(q^2-1)(q+1)\right)\dfrac{q(s-1)-1}{(q+1)(q^2(s-1)(t-1)-1)} + (1+q)\dfrac{q(s-1)-1}{(q+1)(q^2(s-1)(t-1)-1)}\\
			&= \sum_{i=1}^{s} q(q+1) \dfrac{q(t-1)-1}{(q+1)(q^2(s-1)(t-1)-1)}\\
			&\quad + \sum_{i=s+1}^{s+t-1} q^2(q+1)\dfrac{q(s-1)-1}{(q+1)(q^2(s-1)(t-1)-1)}+ (1+q)\dfrac{q(s-1)-1}{(q+1)(q^2(s-1)(t-1)-1)}.
		\end{split}
	\end{equation*}
	Observe that, the terms inside the summation is independent of $i$, hence
		\begin{equation*}
		\begin{split}
			&\quad\sum_{i=1}^{\mathbf{n}} \left( 1+q+(q^2-1)\mathscr{D}_{i \mathbf{n}}\right)\mathbf{x}_i\\
			&= qs \dfrac{q(t-1)-1}{q^2(s-1)(t-1)-1} +  q^2(t-1)\dfrac{q(s-1)-1}{q^2(s-1)(t-1)-1}+\dfrac{q(s-1)-1}{q^2(s-1)(t-1)-1}\\
			&= \frac{qs(q(t-1)-1)+(q^2(t-1)+1)(q(s-1)-1)}{q^2(s-1)(t-1)-1}\\
			&= \frac{(q+1)(q^2(s-1)(t-1)-1)}{q^2(s-1)(t-1)-1} = q+1.
		\end{split}
	\end{equation*}
	Thus, the statement holds when $r=1$. We now proceed by induction: assume that the result is valid for every bi-block graph with $r-1$ blocks, and establish it for the case of $r$ blocks. Let $G$ be a bi-block graph consisting of $r$ blocks and $\mathbf{n}$ vertices. Denote by $K_{s,t}$ the leaf block of $G$, as introduced in Subsection~\ref{sec:ind}. Then,
	\begin{equation*}
		\begin{split}
			&\quad \sum_{i=1}^{\mathbf{n}} \left( 1+q+(q^2-1)\mathscr{D}_{i \mathbf{n}}\right)\mathbf{x}_i\\
			&= \sum_{i=1}^{\mathbf{m}-1} \left( 1+q+(q^2-1)\mathscr{D}_{i \mathbf{n}}\right)\mathbf{x}_i +\left( 1+q+(q^2-1)\mathscr{D}_{\mathbf{m} \mathbf{n}}\right)\mathbf{x}_{\mathbf{m}} + \sum_{i=\mathbf{m}+1}^{\mathbf{m}+s-1} \left( 1+q+(q^2-1)\mathscr{D}_{i \mathbf{n}}\right)\mathbf{x}_i\\
			&\quad + \sum_{i=\mathbf{m}+s}^{\mathbf{m}+s+t-2} \left( 1+q+(q^2-1)\mathscr{D}_{i \mathbf{n}}\right)\mathbf{x}_i + \left( 1+q+(q^2-1)\mathscr{D}_{\mathbf{n} \mathbf{n}}\right)\mathbf{x}_{\mathbf{n}}.
		\end{split}
	\end{equation*}
	Substituting the values of $\mathscr{D}_{i \mathbf{n}}$ for $i \ge \mathbf{m}$, we have
	\begin{equation*}
		\begin{split}
			&\sum_{i=1}^{\mathbf{n}} \left( 1+q+(q^2-1)\mathscr{D}_{i \mathbf{n}}\right)\mathbf{x}_i\\
			&= \sum_{i=1}^{\mathbf{m}-1} \left( 1+q+(q^2-1){\mathscr{D}}_{i \mathbf{n}}\right)\hat{\mathbf{x}}_i +\left( 1+q+(q^2-1)\right)\mathbf{x}_{\mathbf{m}} + \sum_{i=\mathbf{m}+1}^{\mathbf{m}+s-1} \left( 1+q+(q^2-1)\right)\mathbf{x}_i\\
			&\quad + \sum_{i=\mathbf{m}+s}^{\mathbf{m}+s+t-2} \left( 1+q+(q^2-1)(q+1)\right)\mathbf{x}_i + \left( 1+q\right)\mathbf{x}_{\mathbf{n}}\\
			&= \sum_{i=1}^{\mathbf{m}-1} \left( 1+q+(q^2-1){\mathscr{D}}_{i \mathbf{n}}\right)\hat{\mathbf{x}}_i + q(q+1)\mathbf{x}_{\mathbf{m}} + \sum_{i=\mathbf{m}+1}^{\mathbf{m}+s-1} q(q+1)\mathbf{x}_i + \sum_{i=\mathbf{m}+s}^{\mathbf{m}+s+t-2} q^2(q+1)\mathbf{x}_i + \left( 1+q\right)\mathbf{x}_{\mathbf{n}}\\
		\end{split}
	\end{equation*}
	Now using the respective expression for $\mathbf{x}_i$, from the second summand onward, we get
	\begin{equation*}
		\begin{split}
			&\sum_{i=1}^{\mathbf{n}} \left( 1+q+(q^2-1)\mathscr{D}_{i \mathbf{n}}\right)\mathbf{x}_i\\
			&= \sum_{i=1}^{\mathbf{m}-1} \left( 1+q+(q^2-1){\mathscr{D}}_{i \mathbf{n}}\right)\hat{\mathbf{x}}_i + q(q+1) \left( \hat{\mathbf{x}}_{\mathbf{m}} + \dfrac{q(t-1)-1}{(q+1)(q^2(s-1)(t-1)-1)} -1\right)\\
			&\quad +q(q+1)(s-1)\dfrac{q(t-1)-1}{(q+1)(q^2(s-1)(t-1)-1)} + q^2(q+1)(t-1) \dfrac{q(s-1)-1}{(q+1)(q^2(s-1)(t-1)-1)}\\
		\end{split}
	\end{equation*}
	Next, we separately deal with the terms that do not contain $\hat{\mathbf{x}}_i$ or $\hat{\mathbf{x}}_{\mathbf{m}}$.
	\begin{equation*}
		\begin{split}
			& q(q+1) \left( \dfrac{q(t-1)-1}{(q+1)(q^2(s-1)(t-1)-1)} -1\right) +q(q+1)(s-1)\dfrac{q(t-1)-1}{(q+1)(q^2(s-1)(t-1)-1)}\\
			& + q^2(q+1)(t-1) \dfrac{q(s-1)-1}{(q+1)(q^2(s-1)(t-1)-1)}\\
			&= qs\dfrac{q(t-1)-1}{q^2(s-1)(t-1)-1} +q^2(t-1)\dfrac{q(s-1)-1}{q^2(s-1)(t-1)-1} +\dfrac{q(s-1)-1}{q^2(s-1)(t-1)-1}-q(q+1)\\
			&= q+1 - q(q+1),
		\end{split}
	\end{equation*}
	where the simplification in last step follows from the computation for single block. Thus combining all the steps we have
	\begin{equation*}
		\begin{split}
			&\sum_{i=1}^{\mathbf{n}} \left( 1+q+(q^2-1)\mathscr{D}_{i \mathbf{n}}\right)\mathbf{x}_i\\
			&= \sum_{i=1}^{\mathbf{m}-1} \left( 1+q+(q^2-1){\mathscr{D}}_{i \mathbf{n}}\right)\hat{\mathbf{x}}_i + q(q+1)\hat{\mathbf{x}}_{\mathbf{m}} + q+1 - q(q+1)\\
			&= \sum_{i=1}^{\mathbf{m}-1} \left( 1+q+(q^2-1)(1+q\hat{\mathscr{D}}_{i \mathbf{m}})\right)\hat{\mathbf{x}}_i + q(q+1)\hat{\mathbf{x}}_{\mathbf{m}} + q+1 - q(q+1)\\
			&= q\sum_{i=1}^{\mathbf{m}} \left( 1+q+(q^2-1)\hat{\mathscr{D}}_{i \mathbf{m}}\right) \hat{\mathbf{x}}_i - q(q+1) + q+1 = + q+1,
		\end{split}
	\end{equation*}
	where, we have used the fact that $\hat{\mathscr{D}}_{\mathbf{m} \mathbf{m}} = 0$ and
	\[
	\sum_{i=1}^{\mathbf{m}} \left( 1+q+(q^2-1)\hat{\mathscr{D}}_{i \mathbf{m}}\right) \hat{\mathbf{x}}_i = q+1,
	\]
	which follows by applying induction hypothesis on $H$ and hence the result follows.
\end{proof}

\begin{cor}\label{cor:q+1}
Let $G$ be a bi-block graph with $\mathbf{n}$ vertices, and let $\mathscr{D}$ denote its $q$-distance matrix. If $\mathbf{x}$ is the $\mathbf{n}$-dimensional vector defined in Equation~\eqref{eqn:x-0}, then
\[
\sum_{i=1}^{\mathbf{n}} \left(1+(q-1)\mathscr{D}_{i \mathbf{n}}\right)\mathbf{x}_i = 1.
\]
\end{cor}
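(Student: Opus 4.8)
The plan is to derive this identity directly from Lemma~\ref{lem:q+1}, which was just established, via a single algebraic factorization rather than repeating the induction. The guiding observation is that the summand appearing in the corollary is, up to the scalar factor $(q+1)$, exactly the summand appearing in the lemma.

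Concretely, the first step is to verify the pointwise identity
\[
1 + q + (q^2 - 1)\mathscr{D}_{i \mathbf{n}} = (q+1)\left(1 + (q-1)\mathscr{D}_{i \mathbf{n}}\right),
\]
which follows by expanding the right-hand side and using $q^2 - 1 = (q+1)(q-1)$. Since this factor of $(q+1)$ is independent of the index $i$, I would then pull it out of the entire sum to obtain
\[
\sum_{i=1}^{\mathbf{n}} \left(1 + q + (q^2 - 1)\mathscr{D}_{i \mathbf{n}}\right)\mathbf{x}_i = (q+1)\sum_{i=1}^{\mathbf{n}} \left(1 + (q-1)\mathscr{D}_{i \mathbf{n}}\right)\mathbf{x}_i.
\]

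The second step is to invoke Lemma~\ref{lem:q+1}, which asserts that the left-hand side equals $q+1$. Equating the two expressions gives $(q+1)\sum_{i=1}^{\mathbf{n}} (1 + (q-1)\mathscr{D}_{i \mathbf{n}})\mathbf{x}_i = q+1$. Finally, because the indeterminate $q$ satisfies Condition~\ref{con}, in particular $q \neq -1$, the scalar $q+1$ is nonzero, so I may divide both sides by $q+1$ to conclude that the desired sum equals $1$.

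There is essentially no genuine obstacle here: the entire content is the factorization $q^2 - 1 = (q+1)(q-1)$ together with the nonvanishing of $q+1$. The only point requiring a moment of care is to flag explicitly that $q+1 \neq 0$ under Condition~\ref{con} so that the division is legitimate; no induction or case analysis over the block structure is needed, since all of that work has already been absorbed into Lemma~\ref{lem:q+1}.
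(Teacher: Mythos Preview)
Your proposal is correct and matches the paper's own argument essentially verbatim: the paper states that the corollary follows immediately from Lemma~\ref{lem:q+1} after factoring out $q+1$ from both sides, which is precisely the factorization $1+q+(q^2-1)\mathscr{D}_{i\mathbf{n}} = (q+1)\bigl(1+(q-1)\mathscr{D}_{i\mathbf{n}}\bigr)$ you use, followed by division by $q+1 \neq 0$ under Condition~\ref{con}.
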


\noindent The proof follows immediately from Lemma~\ref{lem:q+1}, after factoring out $q+1$ from both sides of the equation. To proceed further, we introduce a notation that will be used in subsequent results. 
\begin{defn}\label{defn:lambdaG}
	Let $q$ be a non-zero indeterminate that satisfies Condition~\ref{con} and $G$ be the complete bipartite graph $K_{s,t}$ then we define
	\[
	\lambda_G = \frac{(q+1)^2(s-1)(t-1)-st}{(q+1)(q^2(s-1)(t-1)-1)}
	\]
	and if $G$ is a bi-block graph with $r$ blocks, $G_i = K_{m_i.n_i}$, for $1 \le i \le r$, then
	\begin{equation}\label{eqn:lambdaG}
		\lambda_G = \sum_{i=1}^r \lambda_{G_i}.
	\end{equation}
\end{defn}

\begin{lem}\label{lem:Dx=lamda}
	Let $q$ be a non-zero indeterminate that satisfies Condition~\ref{con} and $G$ be a bi-block graph with $\mathbf{n}$ vertices and $r$ blocks. Let $\mathscr{D}$ be the $q$-distance matrix of $G$ and $\mathbf{x}$ be the $\mathbf{n}$-dimensional vector as defined in Equation~\eqref{eqn:x-0}, then
	\[
	\mathscr{D} \mathbf{x} = \lambda_G \mathds{1}_{\mathbf{n}},
	\]
	where $\lambda_G$ is same as in Definition~\ref{defn:lambdaG}.
\end{lem}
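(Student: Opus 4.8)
The plan is to induct on the number of blocks $r$, following the decomposition set up in Subsection~\ref{sec:ind} and using the block form of $\mathscr{D}$, $\mathbf{A}$, $\mathbf{B}$ recorded there together with the expression for $\mathbf{x}$ in Equation~\eqref{eqn:x}. For the base case $r=1$ with $G=K_{s,t}$, every vertex has block degree $1$, so by Equation~\eqref{eqn:x-0} the vector $\mathbf{x}$ takes just two values, say $c$ on $X$ and $d$ on $Y$. Using the explicit form of $\mathscr{D}(K_{s,t})$ (entries $[2]=q+1$ within a part and $[1]=1$ across the parts), every entry of $\mathscr{D}\mathbf{x}$ reduces to one of two scalars: an $X$-row gives $(q+1)(s-1)c + t d$ and a $Y$-row gives $(q+1)(t-1)d + s c$. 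A direct simplification, symmetric under $s\leftrightarrow t$, shows both equal $\lambda_{K_{s,t}}$ from Definition~\ref{defn:lambdaG}; this settles $r=1$.

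Before the inductive step I would record the companion identity $\mathds{1}_{\mathbf{n}}^{T}\mathbf{x} = 1+(1-q)\lambda_G$, which is needed below. It has its own short induction: the base case is the single computation $sc+td = 1+(1-q)\lambda_{K_{s,t}}$, and the step uses Equation~\eqref{eqn:x} together with $\lambda_G=\lambda_H+\lambda_{K_{s,t}}$ to reduce exactly to that base case. (This identity also follows from Lemma~\ref{lem:q+1} once $\mathscr{D}\mathbf{x}=\lambda_G\mathds{1}$ is known, so to avoid circularity I would either prove it first on its own or run the two inductions simultaneously.)

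For the inductive step I partition $\mathscr{D}\mathbf{x}$ according to the three groups of rows — the $\mathbf{m}$ rows of $H$, the $s-1$ rows of $X_r\setminus\{v_{\mathbf{m}}\}$, and the $t$ rows of $Y_r$ — and use Equation~\eqref{eqn:x} to write $\mathbf{x}$ as $\hat{\mathbf{x}}+(c-1)\mathbf{e}_{\mathbf{m}}$ on $H$, as $c\,\mathds{1}_{s-1}$ on $X_r\setminus\{v_{\mathbf{m}}\}$, and as $d\,\mathds{1}_{t}$ on $Y_r$. The products collapse using $E_1\mathds{1}_{s-1}=(s-1)\mathbf{e}_{\mathbf{m}}$, $E_2\mathds{1}_{t}=t\,\mathbf{e}_{\mathbf{m}}$, and $E_1^{T}\mathbf{v}=v_{\mathbf{m}}\mathds{1}_{s-1}$, $E_2^{T}\mathbf{v}=v_{\mathbf{m}}\mathds{1}_{t}$ (immediate from the definitions of $E_1,E_2$, and in the same spirit as Remark~\ref{rem:identities}), the induction hypothesis $\hat{\mathscr{D}}\hat{\mathbf{x}}=\lambda_H\mathds{1}_{\mathbf{m}}$, the fact $\hat{\mathscr{D}}_{\mathbf{m}\mathbf{m}}=0$ (so that $(\hat{\mathscr{D}}\mathbf{x}^{(1)})_{\mathbf{m}}=\lambda_H$), and the auxiliary sum identity on $H$. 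In the $H$-block the outcome is a multiple of $\mathds{1}_{\mathbf{m}}$ plus a multiple of the last column $\hat{\mathscr{D}}\mathbf{e}_{\mathbf{m}}$: the $\mathds{1}_{\mathbf{m}}$-coefficient is $\lambda_H+(q+1)(s-1)c+td$, which is $\lambda_H+\lambda_{K_{s,t}}=\lambda_G$, and the $\hat{\mathscr{D}}\mathbf{e}_{\mathbf{m}}$-coefficient is $(c-1)+q^{2}(s-1)c+qtd$, which must vanish. The $X_r$- and $Y_r$-blocks each reduce to a single scalar equation; after substituting $\mathds{1}^{T}\hat{\mathbf{x}}=1+(1-q)\lambda_H$ these collapse to $\lambda_G$ as well. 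Throughout, $\lambda_G=\lambda_H+\lambda_{K_{s,t}}$ and the same polynomial identities verified in the base case reappear.

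The main obstacle is the coupling of the $H$-block to its last column $\hat{\mathscr{D}}\mathbf{e}_{\mathbf{m}}$: the perturbation $(c-1)\mathbf{e}_{\mathbf{m}}$ in $\mathbf{x}^{(1)}$ and the off-diagonal blocks $q^{2}\hat{\mathscr{D}}E_1$ and $q\hat{\mathscr{D}}E_2$ all feed $\hat{\mathscr{D}}\mathbf{e}_{\mathbf{m}}$ into the product, and this vector is not a multiple of $\mathds{1}_{\mathbf{m}}$. The proof works precisely because its total coefficient vanishes identically, i.e.
\[
[\,q(t-1)-1\,]\,[\,1+q^{2}(s-1)\,] + qt\,[\,q(s-1)-1\,] = (q+1)\,[\,q^{2}(s-1)(t-1)-1\,],
\]
which is exactly where the specific rational form of $\mathbf{x}$ and the nonvanishing guaranteed by Condition~\ref{con} enter. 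Keeping the $E_1/E_2$ bookkeeping straight and recognizing that the full sum $\mathds{1}^{T}\hat{\mathbf{x}}$ must be supplied by the auxiliary identity are the two places that demand the most care; the remaining manipulations are routine once these are in place.
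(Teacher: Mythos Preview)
Your proposal is correct and follows essentially the same route as the paper: induction on the number of blocks, the base case $K_{s,t}$ handled by the two direct row computations, and the inductive step split into the three row-groups $H$, $X_r\setminus\{v_{\mathbf{m}}\}$, $Y_r$, with the key observation that the total coefficient of $\hat{\mathscr{D}}\mathbf{e}_{\mathbf{m}}$ in the $H$-block vanishes. The one point of divergence is the auxiliary identity: the paper feeds in Lemma~\ref{lem:q+1} and Corollary~\ref{cor:q+1}, which are proved beforehand by their own induction (so no circularity arises), whereas you use the equivalent plain sum $\mathds{1}^{T}\hat{\mathbf{x}}=1+(1-q)\lambda_H$---this is the paper's Lemma~\ref{lem:sum-xi}, derived there only \emph{after} the present lemma, but as you correctly observe it admits an independent short induction, so either packaging works and the scalar identities in each case are the same.
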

\begin{proof}
	We complete the proof by induction on the number of blocks. For the base case $r=1$, when $G=K_{s,t}$, we have $\mathbf{n}=s+t$. The argument for a single block is divided into the following two cases:
		
	\underline{\textbf{Case 1.}} $1 \le i \le s$.
		\begin{equation*}
		\begin{split}
			\quad \sum_{j=1}^{\mathbf{n}} \mathscr{D}_{ij} \mathbf{x}_j &= (q+1)(s-1) \dfrac{q(t-1)-1}{(q+1)(q^2(s-1)(t-1)-1)} + t \dfrac{q(s-1)-1}{(q+1)(q^2(s-1)(t-1)-1)}\\
			&= \dfrac{(q+1)(s-1)(q(t-1)-1)+t(q(s-1)-1)}{(q+1)(q^2(s-1)(t-1)-1)}\\
			&= \frac{(q+1)^2(s-1)(t-1)-st}{(q+1)(q^2(s-1)(t-1)-1)}.
		\end{split}
	\end{equation*}
	
	\underline{\textbf{Case 2.}} $s+1 \le i \le s+t$.
	\begin{equation*}
		\begin{split}
			\quad \sum_{j=1}^{\mathbf{n}} \mathscr{D}_{ij} \mathbf{x}_j &= s \dfrac{q(t-1)-1}{(q+1)(q^2(s-1)(t-1)-1)} + (q+1)(t-1) \dfrac{q(s-1)-1}{(q+1)(q^2(s-1)(t-1)-1)}\\
			&= \dfrac{s(q(t-1)-1)+(q+1)(t-1)(q(s-1)-1)}{(q+1)(q^2(s-1)(t-1)-1)}\\
			&= \frac{(q+1)^2(s-1)(t-1)-st}{(q+1)(q^2(s-1)(t-1)-1)}.
		\end{split}
	\end{equation*}
Hence, the statement holds for $r=1$. For the inductive step, assume that the claim is valid for any bi-block graph with $r-1$ blocks; in particular, it holds for $H$, as defined in Subsection~\ref{sec:ind}, and we have 
$
\hat{\mathscr{D}} \hat{\mathbf{x}} = \lambda_H \mathds{1}_{\mathbf{m}}.
$
We now establish the result for $G$ with $r$ blocks, considering the following cases:

	\underline{\textbf{Case 1.}} $1 \le i \le \mathbf{m}$.
	\begin{equation*}
		\begin{split}
			\quad \sum_{j=1}^{\mathbf{n}} \mathscr{D}_{ij} \mathbf{x}_j 
			&= \sum_{j=1}^{\mathbf{m}-1} \mathscr{D}_{ij} \mathbf{x}_j + \mathscr{D}_{i \mathbf{m}} \mathbf{x}_{\mathbf{m}} + \sum_{j=\mathbf{m}+1}^{\mathbf{m}+s-1} \mathscr{D}_{ij} \mathbf{x}_j + \sum_{j=\mathbf{m}+s}^{\mathbf{m}+s+t-1} \mathscr{D}_{ij} \mathbf{x}_j\\
			&= \sum_{j=1}^{\mathbf{m}-1} \hat{\mathscr{D}}_{ij} \hat{\mathbf{x}}_j + \hat{\mathscr{D}}_{i \mathbf{m}} \left[\hat{\mathbf{x}}_{\mathbf{m}} + \dfrac{q(t-1)-1}{(q+1)(q^2(s-1)(t-1)-1)}-1\right]\\
			&\quad +  \sum_{j=\mathbf{m}+1}^{\mathbf{m}+s-1} \left(1+q+q^2\hat{\mathscr{D}}_{i \mathbf{m}}\right)\mathbf{x}_j + \sum_{j=\mathbf{m}+s}^{\mathbf{m}+s+t-1} \left(1+q\hat{\mathscr{D}}_{i \mathbf{m}}\right) \mathbf{x}_j.
		\end{split}
	\end{equation*}
Next, substituting the corresponding expressions for $\mathbf{x}_j$ in the third and fourth summands, we obtain
	\begin{equation*}
		\begin{split}
			\quad \sum_{j=1}^{\mathbf{n}} \mathscr{D}_{ij} \mathbf{x}_j 
			&= \sum_{j=1}^{\mathbf{m}} \hat{\mathscr{D}}_{ij} \hat{\mathbf{x}}_j +\hat{\mathscr{D}}_{i \mathbf{m}} \left[ \dfrac{q(t-1)-1}{(q+1)(q^2(s-1)(t-1)-1)}-1\right]\\
			&\quad + \left(1+q+q^2\hat{\mathscr{D}}_{i \mathbf{m}}\right) \dfrac{(s-1)(q(t-1)-1)}{(q+1)(q^2(s-1)(t-1)-1)}\\
			&\quad + \left(1+q\hat{\mathscr{D}}_{i \mathbf{m}}\right) \dfrac{t(q(s-1)-1)}{(q+1)(q^2(s-1)(t-1)-1)}.
		\end{split}
	\end{equation*}
	Now, grouping the terms that contains $\hat{\mathscr{D}}_{i \mathbf{m}}$, we get
	\begin{equation*}
		\begin{split}
			&\quad \left[\dfrac{q(t-1)-1}{(q+1)(q^2(s-1)(t-1)-1)}-1+ \dfrac{q^2(s-1)(q(t-1)-1)}{(q+1)(q^2(s-1)(t-1)-1)} + \dfrac{qt(q(s-1)-1)}{(q+1)(q^2(s-1)(t-1)-1)}\right] \hat{\mathscr{D}}_{i \mathbf{m}}\\
			&= \left[\dfrac{(q+1)(q^2(s-1)(t-1)-1)}{(q+1)(q^2(s-1)(t-1)-1)} -1\right]\hat{\mathscr{D}}_{i \mathbf{m}} = 0.
		\end{split}
	\end{equation*}
	Thus, we have
	\begin{equation*}
		\begin{split}
			\quad \sum_{j=1}^{\mathbf{n}} \mathscr{D}_{ij} \mathbf{x}_j 
			&= \sum_{j=1}^{\mathbf{m}} \hat{\mathscr{D}}_{ij} \hat{\mathbf{x}}_j + \dfrac{(q+1)(s-1)(q(t-1)-1)}{(q+1)(q^2(s-1)(t-1)-1)} + \dfrac{t(q(s-1)-1)}{(q+1)(q^2(s-1)(t-1)-1)}\\
			&= \sum_{j=1}^{\mathbf{m}} \hat{\mathscr{D}}_{ij} \hat{\mathbf{x}}_j + \frac{(q+1)^2(s-1)(t-1)-st}{(q+1)(q^2(s-1)(t-1)-1)} = \lambda_H + \frac{(q+1)^2(s-1)(t-1)-st}{(q+1)(q^2(s-1)(t-1)-1)} = \lambda_{G},
		\end{split}
	\end{equation*}
	where $\ds \sum_{j=1}^{\mathbf{m}} \hat{\mathscr{D}}_{ij} \hat{\mathbf{x}}_j = \lambda_H$ follows from the induction assumption.\\
	
	\underline{\textbf{Case 2.}} $\mathbf{m}+1 \le i \le \mathbf{m}+s-1$.
	\begin{equation*}
		\begin{split}
			\sum_{j=1}^{\mathbf{n}} \mathscr{D}_{ij} \mathbf{x}_j 
			&= \sum_{j=1}^{\mathbf{m}-1} \mathscr{D}_{ij} \mathbf{x}_j + \mathscr{D}_{i \mathbf{m}} \mathbf{x}_{\mathbf{m}} + \sum_{j=\mathbf{m}+1 \atop j \ne i}^{\mathbf{m}+s-1} \mathscr{D}_{ij} \mathbf{x}_j + \sum_{j=\mathbf{m}+s}^{\mathbf{m}+s+t-1} \mathscr{D}_{ij} \mathbf{x}_j\\
			&= \sum_{j=1}^{\mathbf{m}-1} \left(1+q+q^2\hat{\mathscr{D}}_{\mathbf{m} j}\right) \hat{\mathbf{x}}_j + \left(1+q+q^2\hat{\mathscr{D}}_{\mathbf{m} \mathbf{m}}\right)\mathbf{x}_{\mathbf{m}} + \sum_{j=\mathbf{m}+1 \atop j \ne i}^{\mathbf{m}+s-1} (1+q) \mathbf{x}_j + \sum_{j=\mathbf{m}+s}^{\mathbf{m}+s+t-1} \mathbf{x}_j.
		\end{split}
	\end{equation*}
	Next, using the respective expression for $\mathbf{x}_j$ in the second, third and fourth summand we have
	\begin{equation*}
		\begin{split}
			\sum_{j=1}^{\mathbf{n}} \mathscr{D}_{ij} \mathbf{x}_j 
			&= \sum_{j=1}^{\mathbf{m}-1} \left(1+q+q^2\hat{\mathscr{D}}_{\mathbf{m} j}\right) \hat{\mathbf{x}}_j + \left(1+q+q^2\hat{\mathscr{D}}_{\mathbf{m} \mathbf{m}}\right) \left[\hat{\mathbf{x}}_{\mathbf{m}} +\dfrac{q(t-1)-1}{(q+1)(q^2(s-1)(t-1)-1)}-1\right]\\
			&\quad +\sum_{j=\mathbf{m}+1 \atop j \ne i}^{\mathbf{m}+s-1} (1+q) \dfrac{q(t-1)-1}{(q+1)(q^2(s-1)(t-1)-1)} +  \sum_{j=\mathbf{m}+s}^{\mathbf{m}+s+t-1} \dfrac{q(s-1)-1}{(q+1)(q^2(s-1)(t-1)-1)}\\
			&= \sum_{j=1}^{\mathbf{m}} \left(1+q+q^2\hat{\mathscr{D}}_{\mathbf{m} j}\right) \hat{\mathbf{x}}_j + \left(1+q\right) \left[\dfrac{q(t-1)-1}{(q+1)(q^2(s-1)(t-1)-1)}-1\right]\\
			&\quad +\sum_{j=\mathbf{m}+1 \atop j \ne i}^{\mathbf{m}+s-1} (1+q) \dfrac{q(t-1)-1}{(q+1)(q^2(s-1)(t-1)-1)} +  \sum_{j=\mathbf{m}+s}^{\mathbf{m}+s+t-1} \dfrac{q(s-1)-1}{(q+1)(q^2(s-1)(t-1)-1)}.
		\end{split}
	\end{equation*}
	We now focus on the last three terms of the summation. Observe that in the third and fourth summands, the expressions inside the summation are independent of $j$. Hence, we obtain
	\begin{equation*}
		\begin{split}
			&\quad (q+1)\left[\dfrac{q(t-1)-1}{(q+1)(q^2(s-1)(t-1)-1)}-1\right] + (q+1)(s-2) \dfrac{q(t-1)-1}{(q+1)(q^2(s-1)(t-1)-1)}\\
			&\quad + t \dfrac{q(s-1)-1}{(q+1)(q^2(s-1)(t-1)-1)}\\
			&= \frac{(s-1)(q+1)(q(t-1)-1) + t(q(s-1)-1)}{(q+1)(q^2(s-1)(t-1)-1)}\\
			&=\frac{(q+1)^2(s-1)(t-1)-st}{(q+1)(q^2(s-1)(t-1)-1)}.
		\end{split}
	\end{equation*}
	Thus,
	\begin{equation*}
		\begin{split}
			\quad \sum_{j=1}^{\mathbf{n}} \mathscr{D}_{ij} \mathbf{x}_j 
			&= \sum_{j=1}^{\mathbf{m}} \left(1+q+q^2\hat{\mathscr{D}}_{\mathbf{m} j}\right) \hat{\mathbf{x}}_j - (q+1) + \frac{(q+1)^2(s-1)(t-1)-st}{(q+1)(q^2(s-1)(t-1)-1)}\\
			&= \sum_{j=1}^{\mathbf{m}} \hat{\mathscr{D}}_{\mathbf{m} j} \hat{\mathbf{x}}_j + \sum_{j=1}^{\mathbf{m}} \left(1+q+(q^2-1)\hat{\mathscr{D}}_{\mathbf{m} j}\right) \hat{\mathbf{x}}_j - (q+1) + \frac{(q+1)^2(s-1)(t-1)-st}{(q+1)(q^2(s-1)(t-1)-1)}\\
			&= \lambda_H + \frac{(q+1)^2(s-1)(t-1)-st}{(q+1)(q^2(s-1)(t-1)-1)} = \lambda_{G},
		\end{split}
	\end{equation*}
	where $\ds \sum_{j=1}^{\mathbf{m}} \left(1+q+(q^2-1)\hat{\mathscr{D}}_{\mathbf{m} j}\right) \hat{\mathbf{x}}_j - (q+1) = 0$ follows from Lemma~\ref{lem:q+1} and $\ds \sum_{j=1}^{\mathbf{m}} \hat{\mathscr{D}}_{ij} \hat{\mathbf{x}}_j = \lambda_H$ follows from the induction assumption.\\
	
	\underline{\textbf{Case 3.}} $\mathbf{m}+s \le i \le \mathbf{m}+s+t-1$.
	\begin{equation*}
		\begin{split}
			\quad \sum_{j=1}^{\mathbf{n}} \mathscr{D}_{ij} \mathbf{x}_j 
			&= \sum_{j=1}^{\mathbf{m}-1} \mathscr{D}_{ij} \mathbf{x}_j + \mathscr{D}_{i \mathbf{m}} \mathbf{x}_{\mathbf{m}} + \sum_{j=\mathbf{m}+1}^{\mathbf{m}+s-1} \mathscr{D}_{ij} \mathbf{x}_j + \sum_{j=\mathbf{m}+s \atop j \ne i}^{\mathbf{m}+s+t-1} \mathscr{D}_{ij} \mathbf{x}_j\\
			&= \sum_{j=1}^{\mathbf{m}-1} \left(1+q\hat{\mathscr{D}}_{\mathbf{m} j}\right) \hat{\mathbf{x}}_j + \left(1+q\hat{\mathscr{D}}_{\mathbf{m} \mathbf{m}}\right)\mathbf{x}_{\mathbf{m}} + \sum_{j=\mathbf{m}+1}^{\mathbf{m}+s-1} \mathbf{x}_j + \sum_{j=\mathbf{m}+s \atop j \ne i}^{\mathbf{m}+s+t-1} (1+q) \mathbf{x}_j.
		\end{split}
	\end{equation*}
	Now, we substitute the respective expression for $\mathbf{x}_j$ in the second, third and fourth summand we have
	\begin{equation*}
		\begin{split}
			\sum_{j=1}^{\mathbf{n}} \mathscr{D}_{ij} \mathbf{x}_j 
			&= \sum_{j=1}^{\mathbf{m}-1} \left(1+q\hat{\mathscr{D}}_{\mathbf{m} j}\right) \hat{\mathbf{x}}_j + \left(1+q\hat{\mathscr{D}}_{\mathbf{m} \mathbf{m}}\right)\left[\hat{\mathbf{x}}_{\mathbf{m}} +\dfrac{q(t-1)-1}{(q+1)(q^2(s-1)(t-1)-1)}-1\right]\\
			&\quad +\sum_{j=\mathbf{m}+1}^{\mathbf{m}+s-1} \dfrac{q(t-1)-1}{(q+1)(q^2(s-1)(t-1)-1)} + \sum_{j=\mathbf{m}+s \atop j \ne i}^{\mathbf{m}+s+t-1} (q+1) \dfrac{q(s-1)-1}{(q+1)(q^2(s-1)(t-1)-1)}\\
			&= \sum_{j=1}^{\mathbf{m}} \left(1+q\hat{\mathscr{D}}_{\mathbf{m} j}\right) \hat{\mathbf{x}}_j + \left[\dfrac{q(t-1)-1}{(q+1)(q^2(s-1)(t-1)-1)}-1\right]\\
			&\quad + \sum_{j=\mathbf{m}+1}^{\mathbf{m}+s-1} \dfrac{q(t-1)-1}{(q+1)(q^2(s-1)(t-1)-1)} + \sum_{j=\mathbf{m}+s \atop j \ne i}^{\mathbf{m}+s+t-1} (q+1) \dfrac{q(s-1)-1}{(q+1)(q^2(s-1)(t-1)-1)}.
		\end{split}
	\end{equation*}
	Next, we separate the last three terms of the summation and simplify them individually. Observe that, in the third and fourth summands, the expressions inside the summation are independent of $j$. Hence, we obtain
	\begin{equation*}
		\begin{split}
			&\dfrac{q(t-1)-1}{(q+1)(q^2(s-1)(t-1)-1)}-1 + \sum_{j=\mathbf{m}+1}^{\mathbf{m}+s-1} \dfrac{q(t-1)-1}{(q+1)(q^2(s-1)(t-1)-1)}\\
			&\quad + \sum_{j=\mathbf{m}+s \atop j \ne i}^{\mathbf{m}+s+t-1} (q+1) \dfrac{q(s-1)-1}{(q+1)(q^2(s-1)(t-1)-1)}\\
			&= \dfrac{s(q(t-1)-1)}{(q+1)(q^2(s-1)(t-1)-1)} + \dfrac{(t-1)(q+1)(q(s-1)-1)}{(q+1)(q^2(s-1)(t-1)-1)} -1\\
			&= \frac{(q+1)^2(s-1)(t-1)-st}{(q+1)(q^2(s-1)(t-1)-1)} -1.
		\end{split}
	\end{equation*}
	Hence,
	\begin{equation*}
		\begin{split}
			\quad \sum_{j=1}^{\mathbf{n}} \mathscr{D}_{ij} \mathbf{x}_j 
			&= \sum_{j=1}^{\mathbf{m}} \left(1+q\hat{\mathscr{D}}_{\mathbf{m} j}\right) \hat{\mathbf{x}}_j - 1 + \frac{(q+1)^2(s-1)(t-1)-st}{(q+1)(q^2(s-1)(t-1)-1)}\\
			&=\sum_{j=1}^{\mathbf{m}}\hat{\mathscr{D}}_{\mathbf{m} j} \hat{\mathbf{x}}_j  + \sum_{j=1}^{\mathbf{m}} \left(1+(q-1)\hat{\mathscr{D}}_{\mathbf{m} j}\right) \hat{\mathbf{x}}_j - 1 + \frac{(q+1)^2(s-1)(t-1)-st}{(q+1)(q^2(s-1)(t-1)-1)}\\
			&= \lambda_H + \frac{(q+1)^2(s-1)(t-1)-st}{(q+1)(q^2(s-1)(t-1)-1)} = \lambda_{G}, 
		\end{split}
	\end{equation*}
	where $\ds \sum_{j=1}^{\mathbf{m}} \left(1+(q-1)\hat{\mathscr{D}}_{\mathbf{m} j}\right) \hat{\mathbf{x}}_j - 1 = 0$ follows from Corollary~\ref{cor:q+1} and $\ds \sum_{j=1}^{\mathbf{m}} \hat{\mathscr{D}}_{ij} \hat{\mathbf{x}}_j = \lambda_H$ follows from the induction assumption. Thus, combining all the cases we have the required result.
\end{proof}

\begin{lem}\label{lem:sum-xi}
		Let $G$ be a bi-block graph with $\mathbf{n}$ vertices and $r$ blocks. Let $\mathscr{D}$ be the $q$-distance matrix of $G$ and $\mathbf{x}$ be the $\mathbf{n}$-dimensional vector as defined in Section~\ref{sec:notations}, then
		\begin{equation}\label{eqn:sum-xi}
			\sum_{i=1}^{\mathbf{n}} \mathbf{x}_i = 1 - (q-1) \lambda_{G}.
		\end{equation}
\end{lem}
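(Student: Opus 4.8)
The plan is to avoid running a fresh induction on the number of blocks and instead obtain the identity directly from the two facts already established, namely Corollary~\ref{cor:q+1} and Lemma~\ref{lem:Dx=lamda}, using nothing more than the symmetry of the $q$-distance matrix. The point is that these two results were set up precisely so that the sum $\sum_i \mathbf{x}_i$ falls out after a single algebraic manipulation.

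First I would record that $\mathscr{D}$ is symmetric, since the underlying graph is undirected and $d(i,j)=d(j,i)$, so $\mathscr{D}_{i\mathbf{n}}=\mathscr{D}_{\mathbf{n}i}$ for every $i$. Starting from Corollary~\ref{cor:q+1} and splitting the summand linearly, I would write
\[
\sum_{i=1}^{\mathbf{n}} \left(1+(q-1)\mathscr{D}_{i \mathbf{n}}\right)\mathbf{x}_i
= \sum_{i=1}^{\mathbf{n}} \mathbf{x}_i + (q-1)\sum_{i=1}^{\mathbf{n}} \mathscr{D}_{i \mathbf{n}}\,\mathbf{x}_i = 1.
\]
The crucial step is to interpret the second sum: by symmetry $\sum_{i=1}^{\mathbf{n}} \mathscr{D}_{i \mathbf{n}}\mathbf{x}_i = \sum_{i=1}^{\mathbf{n}} \mathscr{D}_{\mathbf{n} i}\mathbf{x}_i$, which is exactly the $\mathbf{n}$-th coordinate of the vector $\mathscr{D}\mathbf{x}$. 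Lemma~\ref{lem:Dx=lamda} gives $\mathscr{D}\mathbf{x} = \lambda_G \mathds{1}_{\mathbf{n}}$, so this coordinate equals $\lambda_G$. Substituting back produces $\sum_{i=1}^{\mathbf{n}} \mathbf{x}_i + (q-1)\lambda_G = 1$, and rearranging yields $\sum_{i=1}^{\mathbf{n}} \mathbf{x}_i = 1-(q-1)\lambda_G$, as claimed.

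The only real content of the argument — and the step I would flag as the one to get right — is the identification of $\sum_i \mathscr{D}_{i\mathbf{n}}\mathbf{x}_i$ with $(\mathscr{D}\mathbf{x})_{\mathbf{n}}$; once one notices that symmetry turns the row-index sum appearing in the corollary into a matrix--vector product, Lemma~\ref{lem:Dx=lamda} closes the proof in one line, so there is no genuine obstacle. As an independent cross-check mirroring the style of the earlier lemmas, one could instead argue by induction on the number of blocks $r$: for the base case $G=K_{s,t}$ a direct summation of the two constant $\mathbf{x}$-values over $X$ and $Y$ gives numerator $s(q(t-1)-1)+t(q(s-1)-1)=2qst-(q+1)(s+t)$ over the denominator $(q+1)(q^2(s-1)(t-1)-1)$, which one verifies equals $1-(q-1)\lambda_{K_{s,t}}$; the inductive step then splits the sum via Equation~\eqref{eqn:x}, absorbs the $H$-part into $1-(q-1)\lambda_H$, collapses the leaf-block contribution to $-(q-1)\lambda_{K_{s,t}}$ using the same base-case identity, and finishes by additivity $\lambda_G=\lambda_H+\lambda_{K_{s,t}}$ from Equation~\eqref{eqn:lambdaG}.
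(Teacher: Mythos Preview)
Your proof is correct and follows essentially the same route as the paper: start from Corollary~\ref{cor:q+1}, split off $\sum_i \mathbf{x}_i$, identify the remaining sum as $(\mathscr{D}\mathbf{x})_{\mathbf{n}}$, and invoke Lemma~\ref{lem:Dx=lamda}. The only cosmetic difference is that the paper silently passes from $\mathscr{D}_{i\mathbf{n}}$ to $\mathscr{D}_{\mathbf{n}i}$, whereas you spell out the symmetry step; your added induction sketch is a valid but unnecessary cross-check.
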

\begin{proof}
	From Corollary~\ref{cor:q+1}, we have
	\[
	\sum_{i=1}^{\mathbf{n}} \left(1+(q-1)\mathscr{D}_{\mathbf{n} i}\right) \mathbf{x}_i = 1.
	\]
	Thus,
	\[
	\sum_{i=1}^{\mathbf{n}}  \mathbf{x}_i = 1 - (q-1) \sum_{i=1}^{\mathbf{n}} \mathscr{D}_{\mathbf{n} i}  \mathbf{x}_i
	\]
	and finally using Lemma~\ref{lem:Dx=lamda}, we have the required result.
\end{proof}

\section{Determinant and Inverse of Bi-block Graphs}
In this section, we derive the determinant and establish an explicit expression for the inverse of the $q$-distance matrix of bi-block graphs. We begin by stating the theorem for the determinant, without proof, as it follows directly from Theorem~\ref{thm:det-cof}.
\begin{theorem}
	Let $G$ be an \( \mathbf{n} \)-vertex bi-block graph with $r > 1$ blocks $K_{m_i,n_i}$, $i=1,2,\cdots,r$, where $\mathbf{n} = \sum_{i=1}^r (m_i+n_i)-r+1$ and $\mathscr{D}$ be the $q$-distance matrix of $G$. Let $q \ne -1$, then
	\[
	\xi(\mathscr{D}(G)) = \prod_{i=1}^{r} (-1)^{m_i+n_i-1} (q+1)^{m_i+n_i-1} \left[q^2(m_i-1)(n_i-1)-1\right]
	\]
	and
	\[
	\det(\mathscr{D}(G))  = \sum_{i=1}^r \det(\mathscr{D}(G_i)) \prod_{j \ne i} \xi(\mathscr{D}(G_j)),
	\]
	where $$\det(\mathscr{D}(G_i)) = (-1)^{m_i+n_i-2}(q+1)^{m_i+n_i-2} \left[(q+1)^2(m_i-1)(n_i-1)-m_in_i\right]$$ and $$\xi(\mathscr{D}(G_j)) = (-1)^{m_j+n_j-1} (q+1)^{m_j+n_j-1} \left[q^2(m_j-1)(n_j-1)-1\right].$$
\end{theorem}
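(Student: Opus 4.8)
The plan is to obtain the statement as an immediate consequence of the block-decomposition result in Theorem~\ref{thm:det-cof}, with the per-block quantities supplied by the single-block computations already carried out. Since $G$ is a bi-block graph, its blocks are by definition precisely the complete bipartite graphs $G_i = K_{m_i,n_i}$ for $1 \le i \le r$; thus Theorem~\ref{thm:det-cof} applies verbatim, and the only thing left to do is to insert the known values of $\xi(\mathscr{D}(G_i))$ and $\det(\mathscr{D}(G_i))$ for each block.

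For the cofactor identity I would start from the product formula $\xi(\mathscr{D}(G)) = \prod_{i=1}^{r} \xi(\mathscr{D}(G_i))$ of Theorem~\ref{thm:det-cof} and substitute the single-block value $\xi(\mathscr{D}(K_{m_i,n_i})) = (-1)^{m_i+n_i-1}(q+1)^{m_i+n_i-1}\left[q^2(m_i-1)(n_i-1)-1\right]$ established in Lemma~\ref{lem:cof-single}. This reproduces the displayed product directly. For the determinant I would quote the summation formula $\det(\mathscr{D}(G)) = \sum_{i=1}^r \det(\mathscr{D}(G_i)) \prod_{j \ne i} \xi(\mathscr{D}(G_j))$ from the same theorem, and then simply record the two block-level ingredients: $\det(\mathscr{D}(G_i))$ from Theorem~\ref{thm:D-det-single} and $\xi(\mathscr{D}(G_j))$ from Lemma~\ref{lem:cof-single}. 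Since the statement is phrased in terms of these block quantities rather than in fully expanded form, no further algebraic simplification is needed.

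There is essentially no analytic obstacle here; the work is genuinely bookkeeping. The points that merit a line of verification are that the blocks of a bi-block graph are exactly the $K_{m_i,n_i}$ (immediate from the definition of a bi-block graph), that the standing hypothesis $q \ne -1$ is precisely the condition under which both Theorem~\ref{thm:D-det-single} and Lemma~\ref{lem:cof-single} hold, and that $G$ is a connected (undirected) graph so that Theorem~\ref{thm:det-cof} is applicable. With these observations in place the proof is a one-step substitution, which is why the statement can reasonably be recorded without a separate detailed argument.
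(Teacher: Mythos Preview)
Your proposal is correct and matches the paper's own treatment: the paper states this theorem without proof, remarking that it follows directly from Theorem~\ref{thm:det-cof}, with the per-block values supplied by Theorem~\ref{thm:D-det-single} and Lemma~\ref{lem:cof-single}. Your write-up simply makes this substitution explicit, which is exactly the intended argument.
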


\begin{rem}\label{rem:det-not0}
	Observe that, the determinant of the $q$-distance matrix of a bi-block graph is non-zero if and only if the indeterminate $q$ satisfies Condition~\ref{con-1}.
\end{rem}

\begin{defn}
	Let $G$ be an \( \mathbf{n} \)-vertex bi-block graph with $r > 1$ blocks $K_{m_i,n_i}$, $i=1,2,\cdots,r$, where $\mathbf{n} = \sum_{i=1}^r (m_i+n_i)-r+1$ and $\mathscr{D}$ be the $q$-distance matrix of $G$. Let $q$ be an indeterminate that satisfies Condition~\ref{con}, then we define an \( \mathbf{n} \times  \mathbf{n} \) matrix  $\mathscr{L}$ as follows:
	\begin{equation}\label{eqn:L}
		\mathscr{L} = \frac{q}{q+1} \mathbf{A} - \frac{q^2}{q+1} \mathbf{B} -  \frac{q^2}{q+1} \textup{diag}(\mathbf{y}) + \frac{1}{q+1} \mathbf{I},
	\end{equation}
	where the vector $\mathbf{y}$ and the matrices $\mathbf{A}$ and $\mathbf{B}$ are same as defined in Equations~\eqref{eqn:y-0}, \eqref{eqn:A} and \eqref{eqn:B}.
\end{defn}
Going by the same format, we will use $\hat{\mathscr{L}}$ to denote the corresponding matrix for the subgraph $H$. The main aim of this section is to give an expression for the inverse of the $q$-distance matrix of a bi-block graph in terms of $\mathscr{L}$, more explicitly, we what to show
\[
\mathscr{D}^{-1} = - \mathscr{L} + \frac{1}{\lambda_G} \mathbf{x} \mathbf{x}^T.
\] 

\begin{lem}\label{lem:JL}
	Let $G$ be an \( \mathbf{n} \)-vertex bi-block graph with $r > 1$ blocks and $\mathscr{D}$ be the $q$-distance matrix of $G$. If the indeterminate $q$ satisfies Conditions~\ref{con} and ~\ref{con-1} and $\mathscr{D}$ satisfies the relation $\mathscr{D} \mathscr{L} + \mathbf{I} = \mathds{1} \mathbf{x}^T$, then 
	\begin{equation}\label{eqn:JL-t}
		\mathbf{J}_{t \times \mathbf{n}} \hat{\mathscr{L}} = (1-q) \mathds{1}_t \hat{\mathbf{x}}^T
	\end{equation}
	and
	\begin{equation}\label{eqn:JL-s-1}
		\mathbf{J}_{(s-1) \times \mathbf{n}} \hat{\mathscr{L}} = (1-q) \mathds{1}_{s-1} \hat{\mathbf{x}}^T.
	\end{equation}
\end{lem}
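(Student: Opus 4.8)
The plan is to recognise that the two claimed identities \eqref{eqn:JL-t} and \eqref{eqn:JL-s-1} are two instances of a single statement about the column sums of $\hat{\mathscr{L}}$, namely $\mathds{1}_{\mathbf{m}}^T \hat{\mathscr{L}} = (1-q)\hat{\mathbf{x}}^T$, where $\mathbf{m}$ is the order of $H$ (and of $\hat{\mathscr{L}}$), so that the all-ones blocks multiplying $\hat{\mathscr{L}}$ have $\mathbf{m}$ columns. Indeed, factoring $\mathbf{J}_{t\times\mathbf{m}} = \mathds{1}_t\mathds{1}_{\mathbf{m}}^T$ and $\mathbf{J}_{(s-1)\times\mathbf{m}} = \mathds{1}_{s-1}\mathds{1}_{\mathbf{m}}^T$, left-multiplying this column-sum identity by $\mathds{1}_t$ and by $\mathds{1}_{s-1}$ produces \eqref{eqn:JL-t} and \eqref{eqn:JL-s-1} respectively. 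Thus everything reduces to proving the one identity $\mathds{1}_{\mathbf{m}}^T \hat{\mathscr{L}} = (1-q)\hat{\mathbf{x}}^T$.

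To obtain it, I would first note that $H$ is itself a bi-block graph, now with $r-1 \ge 1$ blocks, and that $q$ still satisfies Conditions~\ref{con} and~\ref{con-1} relative to $H$, since the blocks of $H$ form a subcollection of the blocks of $G$. The assumed relation is therefore to be instantiated on $H$, yielding $\hat{\mathscr{D}}\hat{\mathscr{L}} + \mathbf{I} = \mathds{1}_{\mathbf{m}}\hat{\mathbf{x}}^T$. The two further ingredients I would invoke, both applied to $H$, are Lemma~\ref{lem:Dx=lamda} in the form $\hat{\mathscr{D}}\hat{\mathbf{x}} = \lambda_H\mathds{1}_{\mathbf{m}}$, and Lemma~\ref{lem:sum-xi} in the form $\hat{\mathbf{x}}^T\mathds{1}_{\mathbf{m}} = 1-(q-1)\lambda_H$.

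The computation itself is short. Since $\hat{\mathscr{D}}$ is symmetric (being the $q$-distance matrix of an undirected graph), transposing $\hat{\mathscr{D}}\hat{\mathbf{x}} = \lambda_H\mathds{1}_{\mathbf{m}}$ gives $\hat{\mathbf{x}}^T\hat{\mathscr{D}} = \lambda_H\mathds{1}_{\mathbf{m}}^T$. Left-multiplying the relation $\hat{\mathscr{D}}\hat{\mathscr{L}} + \mathbf{I} = \mathds{1}_{\mathbf{m}}\hat{\mathbf{x}}^T$ by $\hat{\mathbf{x}}^T$ then turns the left-hand side into $\lambda_H\mathds{1}_{\mathbf{m}}^T\hat{\mathscr{L}} + \hat{\mathbf{x}}^T$ and the right-hand side into $(\hat{\mathbf{x}}^T\mathds{1}_{\mathbf{m}})\hat{\mathbf{x}}^T = (1-(q-1)\lambda_H)\hat{\mathbf{x}}^T$. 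Subtracting $\hat{\mathbf{x}}^T$ from both sides leaves $\lambda_H\mathds{1}_{\mathbf{m}}^T\hat{\mathscr{L}} = -(q-1)\lambda_H\,\hat{\mathbf{x}}^T$, and dividing by $\lambda_H$ gives $\mathds{1}_{\mathbf{m}}^T\hat{\mathscr{L}} = (1-q)\hat{\mathbf{x}}^T$, as required.

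The only delicate point I would check explicitly is that $\lambda_H \ne 0$, so that the final division is legitimate. This is mild: since $q$ is an indeterminate, $\lambda_H$ is an element of the field of rational functions $\mathbb{Q}(q)$, and from its explicit form $\lambda_H = \sum_{i=1}^{r-1}\lambda_{G_i}$ in Definition~\ref{defn:lambdaG} one checks that it is not identically zero (for instance by examining its leading behaviour as $q\to\infty$), hence invertible. The main thing to get right is therefore not any hard estimate but the bookkeeping: matching the orders of the all-ones blocks to the order $\mathbf{m}$ of $\hat{\mathscr{L}}$, and correctly instantiating the assumed relation together with Lemmas~\ref{lem:Dx=lamda} and~\ref{lem:sum-xi} on the subgraph $H$ rather than on $G$.
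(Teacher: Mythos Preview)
Your argument is correct and follows essentially the same route as the paper: both reduce to the single row identity $\mathds{1}_{\mathbf{m}}^T\hat{\mathscr{L}}=(1-q)\hat{\mathbf{x}}^T$ and derive it from the relation for $H$ together with Lemmas~\ref{lem:Dx=lamda} and~\ref{lem:sum-xi}. The only difference is the final algebraic step: the paper first inverts $\hat{\mathscr{D}}$ to write $\hat{\mathscr{L}}=-\hat{\mathscr{D}}^{-1}+\tfrac{1}{\lambda_H}\hat{\mathbf{x}}\hat{\mathbf{x}}^T$ and then left-multiplies by $\mathds{1}_{\mathbf{m}}^T$, whereas you left-multiply the relation directly by $\hat{\mathbf{x}}^T$ and use the symmetry of $\hat{\mathscr{D}}$ to convert $\hat{\mathbf{x}}^T\hat{\mathscr{D}}$ into $\lambda_H\mathds{1}_{\mathbf{m}}^T$. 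Your variant is marginally cleaner in that it never appeals to the invertibility of $\hat{\mathscr{D}}$ (so Condition~\ref{con-1} for $H$ is not needed), at the cost of making the nonvanishing of $\lambda_H$ explicit---a point the paper uses as well but leaves unsaid. You are also right to read the subscript on $\mathbf{J}$ as $\mathbf{m}$ rather than $\mathbf{n}$; the dimensions and the later use of the lemma in Theorem~\ref{thm:DL+I} confirm this.
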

\begin{proof}
	From the relation $\mathscr{D} \mathscr{L} + \mathbf{I} = \mathds{1}_\mathbf{n} \mathbf{x}^T$, we have $ \mathscr{L} = -\mathscr{D}^{-1} + \mathscr{D}^{-1} \mathds{1}_\mathbf{n} \mathbf{x}^T$. Now, using Lemma~\ref{lem:Dx=lamda}, we have 
	\begin{equation}\label{eqn:D-1-x}
		\mathscr{D}^{-1} \mathds{1}_\mathbf{n} = \frac{1}{\lambda_G} \mathbf{x}.
	\end{equation}
	Hence, the expression of $\mathscr{L}$ simplifies to $-\mathscr{D}^{-1} + \frac{1}{\lambda_G} \mathbf{x} \mathbf{x}^T$. Thus,
	\begin{equation*} 
		\begin{split}
			\mathbf{J}_{t \times \mathbf{n}} \hat{\mathscr{L}} 
			& = \mathbf{J}_{t \times \mathbf{n}} \left[-\mathscr{D}^{-1} + \frac{1}{\lambda_G} \mathbf{x} \mathbf{x}^T\right]\\
			&= - \mathds{1}_t \left(\mathds{1}_\mathbf{n}^T \mathscr{D}^{-1}\right)  + \frac{1}{\lambda_G} \mathds{1}_t \left(\mathds{1}_\mathbf{n}^T \mathbf{x}\right)  \mathbf{x}^T\\
			&\overset{\eqref{eqn:D-1-x}}{=}  - \frac{1}{\lambda_G} \mathds{1}_t \mathbf{x}^T + \frac{1}{\lambda_G} \mathds{1}_t \left(\sum_{i=1}^{\mathbf{n}} \mathbf{x}_{i}\right)  \mathbf{x}^T\\
			&\overset{\eqref{eqn:sum-xi}}{=}  - \frac{1}{\lambda_G} \mathds{1}_t \mathbf{x}^T + \frac{1}{\lambda_G} \mathds{1}_t \left(1 - (q-1) \lambda_{G}\right)  \mathbf{x}^T\\
			&= (1-q) \mathds{1}_t \hat{\mathbf{x}}^T.
		\end{split}
	\end{equation*}
	One can similarly prove the other identity.
\end{proof}

\begin{lem}\label{lem:E1-DL}
	Let $H$ be the subgraph of $G$, which satisfies the relation $\hat{\mathscr{D}} \hat{\mathscr{L}} + \mathbf{I} = \mathds{1}_{\mathbf{m}} \hat{\mathbf{x}}^T$, then
	\begin{equation}\label{eqn:E1-DL}
		E_1^T (\hat{\mathscr{D}} \hat{\mathscr{L}} + \mathbf{I} ) = \mathds{1}_{s-1} \hat{\mathbf{x}}^T
	\end{equation}
	and
	\begin{equation}\label{eqn:E1-Emm}
		E_1^T \hat{\mathscr{D}} E_{\mathbf{m} \mathbf{m}} = \mathbf{0}.
	\end{equation}
\end{lem}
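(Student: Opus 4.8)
The plan is to exploit the explicit rank-one structure of the two auxiliary matrices $E_1$ and $E_{\mathbf{m}\mathbf{m}}$. Recall that $E_1$ is the $\mathbf{m}\times(s-1)$ matrix whose last (i.e.\ $\mathbf{m}$-th) row is all ones and whose other entries vanish, so that $E_1 = \mathbf{e}_{\mathbf{m}}\mathds{1}_{s-1}^T$ and hence $E_1^T = \mathds{1}_{s-1}\mathbf{e}_{\mathbf{m}}^T$. Likewise $E_{\mathbf{m}\mathbf{m}} = \mathbf{e}_{\mathbf{m}}\mathbf{e}_{\mathbf{m}}^T$. The effect of left-multiplying any $\mathbf{m}\times k$ matrix by $E_1^T$ is therefore to extract its $\mathbf{m}$-th row and replicate it along the $s-1$ rows of the product, since $E_1^T N = \mathds{1}_{s-1}\bigl(\mathbf{e}_{\mathbf{m}}^T N\bigr)$. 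This observation reduces both identities to evaluating a single row or a single diagonal entry.

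For the first identity \eqref{eqn:E1-DL}, I would substitute the hypothesis $\hat{\mathscr{D}}\hat{\mathscr{L}}+\mathbf{I} = \mathds{1}_{\mathbf{m}}\hat{\mathbf{x}}^T$ directly and compute
\[
E_1^T(\hat{\mathscr{D}}\hat{\mathscr{L}}+\mathbf{I}) = \mathds{1}_{s-1}\mathbf{e}_{\mathbf{m}}^T\mathds{1}_{\mathbf{m}}\hat{\mathbf{x}}^T = \mathds{1}_{s-1}\hat{\mathbf{x}}^T,
\]
where the last step uses $\mathbf{e}_{\mathbf{m}}^T\mathds{1}_{\mathbf{m}} = 1$. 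This gives the claimed equality with no further work.

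For the second identity \eqref{eqn:E1-Emm}, I would use both rank-one factorizations at once:
\[
E_1^T\hat{\mathscr{D}}E_{\mathbf{m}\mathbf{m}} = \mathds{1}_{s-1}\bigl(\mathbf{e}_{\mathbf{m}}^T\hat{\mathscr{D}}\mathbf{e}_{\mathbf{m}}\bigr)\mathbf{e}_{\mathbf{m}}^T = \mathds{1}_{s-1}\,\hat{\mathscr{D}}_{\mathbf{m}\mathbf{m}}\,\mathbf{e}_{\mathbf{m}}^T,
\]
and then invoke the defining property of the $q$-distance matrix that its diagonal entries vanish, so $\hat{\mathscr{D}}_{\mathbf{m}\mathbf{m}} = 0$ and the whole expression is the zero matrix.

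There is essentially no genuine obstacle in this lemma: both parts are immediate once the matrices $E_1$ and $E_{\mathbf{m}\mathbf{m}}$ are written in terms of the standard basis vector $\mathbf{e}_{\mathbf{m}}$ at the cut-vertex $v_{\mathbf{m}}$. The only point that requires care is correctly tracking that the distinguished index is $\mathbf{m}$ (the cut-vertex of the leaf block inside $H$), so that $E_1^T$ genuinely picks out the $\mathbf{m}$-th row and $\hat{\mathscr{D}}_{\mathbf{m}\mathbf{m}}$ is indeed a diagonal entry. The lemma is best viewed as a small bookkeeping step that will be fed into the inductive verification of the relation $\mathscr{D}\mathscr{L}+\mathbf{I}=\mathds{1}\mathbf{x}^T$ for the larger graph $G$.
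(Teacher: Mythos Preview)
Your proposal is correct and follows essentially the same approach as the paper: both proofs write $E_1^T = \mathds{1}_{s-1}\mathbf{e}_{\mathbf{m}}^T$ and $E_{\mathbf{m}\mathbf{m}} = \mathbf{e}_{\mathbf{m}}\mathbf{e}_{\mathbf{m}}^T$, substitute the hypothesis for the first identity, and use $\hat{\mathscr{D}}_{\mathbf{m}\mathbf{m}}=0$ for the second. The arguments are effectively identical.
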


\begin{proof}
	Since  $\hat{\mathscr{D}} \hat{\mathscr{L}} + \mathbf{I} = \mathds{1}_{\mathbf{m}} \hat{\mathbf{x}}^T$, we have 
	\[
	E_1^T (\hat{\mathscr{D}} \hat{\mathscr{L}} + \mathbf{I} ) = E_1^T \mathds{1}_{\mathbf{m}} \hat{\mathbf{x}}^T = \mathds{1}_{s-1} \left(e_{\mathbf{m}}^T \mathds{1}_{\mathbf{m}} \right) \hat{\mathbf{x}}^T = \mathds{1}_{s-1} \hat{\mathbf{x}}^T,
	\]
	where the last equality follows from the fact that $e_{\mathbf{m}}$ has only one non-zero entry $1$. Next,
	\[
	E_1^T \hat{\mathscr{D}} E_{\mathbf{m} \mathbf{m}} = (\mathds{1}_{s-1} e_{\mathbf{m}}^T) \hat{\mathscr{D}} (e_{\mathbf{m}} e_{\mathbf{m}}^T) = \mathds{1}_{s-1} \left(e_{\mathbf{m}}^T \hat{\mathscr{D}} e_{\mathbf{m}} \right) e_{\mathbf{m}}^T = \mathbf{0},
	\]
	where the last equality follows from the fact that the $(\mathbf{m},\mathbf{m})$ entry of $\mathscr{D}$ is $0$ and hence $e_{\mathbf{m}}^T \hat{\mathscr{D}} e_{\mathbf{m}} = 0$.
\end{proof}

\begin{rem}
	Under the same assumptions of Lemma~\ref{lem:E1-DL}, one can similarly prove that
	\begin{equation}\label{eqn:E2-DL}
		E_2^T (\hat{\mathscr{D}} \hat{\mathscr{L}} + \mathbf{I} ) = \mathds{1}_{t} \hat{\mathbf{x}}^T
	\end{equation}
	and
	\begin{equation}\label{eqn:E2-Emm}
		E_2^T \hat{\mathscr{D}} E_{\mathbf{m} \mathbf{m}} = \mathbf{0}.
	\end{equation}
\end{rem}

\begin{lem}\label{lem:LD+I-single}
	Let $G$ be the complete bipartite graph $K_{s,t}$ and $\mathscr{D}$ be the $q$-distance matrix of $G$. Let $q$ be an indeterminate that satisfies Condition~\ref{con-1}, then $$\mathscr{D} \mathscr{L} +\mathbf{I}_{s+t} = \mathds{1}_{s+t} \mathbf{x}^T.$$
\end{lem}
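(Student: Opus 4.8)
The plan is to verify the stated matrix identity directly, block by block. Since $r=1$, every vertex of $K_{s,t}$ lies in exactly one block, so $\hat{d}_G(v)=1$ and the terms $(k-1)$ in Equations~\eqref{eqn:x-0} and~\eqref{eqn:y-0} vanish; consequently $\mathbf{x}$ and $\mathbf{y}$ are constant on $X$ and on $Y$. Writing $\Delta = q^2(s-1)(t-1)-1$ (nonzero by Condition~\ref{con}, which is needed for $\mathscr{L}$ and $\mathbf{x}$ to be defined), the first step is to assemble $\mathscr{L}$ in block form from its definition~\eqref{eqn:L}, using the single-block shapes of $\mathbf{A}$ (Equation~\eqref{eqn:A}), $\mathbf{B}$ (Equation~\eqref{eqn:B}) and $\textup{diag}(\mathbf{y})$. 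The essential simplification is that in each diagonal block the $\mathbf{I}$-contribution of $-\tfrac{q^2}{q+1}\mathbf{B}$ cancels exactly against that of $-\tfrac{q^2}{q+1}\textup{diag}(\mathbf{y})$, leaving
$$
\mathscr{L}=\frac{1}{q+1}\begin{bmatrix} \mathbf{I}_s-\tfrac{q^2(t-1)}{\Delta}\mathbf{J}_s & \tfrac{q}{\Delta}\mathbf{J}_{s\times t}\\ \tfrac{q}{\Delta}\mathbf{J}_{t\times s} & \mathbf{I}_t-\tfrac{q^2(s-1)}{\Delta}\mathbf{J}_t\end{bmatrix}.
$$

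Next I would record the target in block form. With $\alpha=\tfrac{q(t-1)-1}{(q+1)\Delta}$ and $\beta=\tfrac{q(s-1)-1}{(q+1)\Delta}$ denoting the two values taken by the entries of $\mathbf{x}$, the outer product $\mathds{1}_{s+t}\mathbf{x}^T$ is the block matrix with constant blocks $\alpha\mathbf{J}_s,\ \beta\mathbf{J}_{s\times t},\ \alpha\mathbf{J}_{t\times s},\ \beta\mathbf{J}_t$, so that the claim is equivalent to showing that $\mathscr{D}\mathscr{L}$ equals this matrix minus $\mathbf{I}_{s+t}$. I would then multiply out $\mathscr{D}\mathscr{L}$ using the block form of $\mathscr{D}(K_{s,t})$ from Theorem~\ref{thm:D-det-single} together with the elementary identities $\mathbf{J}_m^2=m\mathbf{J}_m$ and $\mathbf{J}_{m\times n}\mathbf{J}_{n\times p}=n\mathbf{J}_{m\times p}$. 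For the $(1,1)$ block, the product of $(q+1)(\mathbf{J}_s-\mathbf{I}_s)$ with the top-left block of $\mathscr{L}$ gives $\left(1-\tfrac{q^2(s-1)(t-1)}{\Delta}\right)\mathbf{J}_s-\mathbf{I}_s$, whose $\mathbf{J}_s$-coefficient collapses to $-\tfrac{1}{\Delta}$ precisely because $q^2(s-1)(t-1)=\Delta+1$; adding the cross-term $\tfrac{qt}{(q+1)\Delta}\mathbf{J}_s$ and combining over $(q+1)\Delta$ yields coefficient $\tfrac{q(t-1)-1}{(q+1)\Delta}=\alpha$, so the block is $\alpha\mathbf{J}_s-\mathbf{I}_s$, as required. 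The $(1,2)$ block reduces to $\beta\mathbf{J}_{s\times t}$ after the numerator $q(s-1)(q+1)+\Delta-q^2(s-1)t$ telescopes to $q(s-1)-1$.

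The remaining two blocks need not be recomputed: both $\mathscr{D}(K_{s,t})$ and $\mathscr{L}$ are invariant under the involution that swaps the two vertex classes together with $s\leftrightarrow t$ (note that $\Delta$ is symmetric in $s,t$), and $\mathds{1}_{s+t}\mathbf{x}^T-\mathbf{I}_{s+t}$ enjoys the same symmetry; hence the verified $(1,1)$ and $(1,2)$ blocks yield the $(2,2)$ and $(2,1)$ blocks automatically. I expect the main obstacle to be purely the algebraic bookkeeping of the $\mathbf{J}$-coefficients across the diagonal and cross terms, and in particular applying the substitution $q^2(s-1)(t-1)=\Delta+1$ at the right moment so that every coefficient collapses to exactly the value $\alpha$ or $\beta$ dictated by $\mathbf{x}$.

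As an alternative route, one could instead invoke the explicit inverse from Theorem~\ref{thm:D-inv-single} together with the relation $\mathscr{D}^{-1}\mathds{1}=\tfrac{1}{\lambda_G}\mathbf{x}$ obtained from $\mathscr{D}\mathbf{x}=\lambda_G\mathds{1}$ in Lemma~\ref{lem:Dx=lamda}. Under the invertibility guaranteed by Condition~\ref{con-1}, the claim $\mathscr{D}\mathscr{L}+\mathbf{I}_{s+t}=\mathds{1}_{s+t}\mathbf{x}^T$ is equivalent to the identity $\mathscr{L}=\tfrac{1}{\lambda_G}\mathbf{x}\mathbf{x}^T-\mathscr{D}^{-1}$, which can be checked block by block in the same spirit; this variant explains the appearance of Condition~\ref{con-1} in the hypothesis.
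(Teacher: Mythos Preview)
Your proposal is correct and follows essentially the same route as the paper: both arguments first assemble $\mathscr{L}$ in the identical $2\times 2$ block form and then verify $\mathscr{D}\mathscr{L}+\mathbf{I}_{s+t}=\mathds{1}_{s+t}\mathbf{x}^T$ by direct block multiplication, with the same key simplification $q^2(s-1)(t-1)=\Delta+1$. The only difference is cosmetic: the paper carries out all four block computations explicitly, whereas you compute only the $(1,1)$ and $(1,2)$ blocks and then invoke the $s\leftrightarrow t$ symmetry (which indeed swaps $\alpha\leftrightarrow\beta$ and fixes $\Delta$) to obtain the remaining two, a legitimate shortcut that halves the bookkeeping.
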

\begin{proof}
	From the definition of $\mathscr{L}$ in Equation~\ref{eqn:L}, we have the following expression for the complete bipartite graph $K_{s,t}$:
	\[
	\mathscr{L} = \frac{1}{(q+1)\left[q^2(s-1)(t-1)-1\right]} \begin{bmatrix}[cc]
		-q^2(t-1)\mathbf{J}_s & q\mathbf{J}_{s \times t}\\
		q\mathbf{J}_{t \times s} & -q^2(s-1)\mathbf{J}_t
	\end{bmatrix} + \frac{1}{q+1} \mathbf{I}_{s+t}.
	\]
	We complete the proof by computing each entry of the $2 \times 2$ block matrix $\mathscr{D} \mathscr{L} +\mathbf{I}_{s+t}$ and show it to be equal to the corresponding entry in $\mathds{1} \mathbf{x}^T$. First, we compute $(1,1)$ entry of $\mathscr{D} \mathscr{L} +\mathbf{I}_{s+t}$.
	\begin{equation*}
		\begin{split}
			(\mathscr{D} \mathscr{L} +\mathbf{I}_{s+t})_{11} &= \frac{-q^2(q+1)(s-1)(t-1)\mathbf{J}_s + qt\mathbf{J}_s}{(q+1)\left[q^2(s-1)(t-1)-1\right]} + (\mathbf{J}_s-\mathbf{I}_s) + \mathbf{I}_s\\
			&= \left[\frac{-q^2(q+1)(s-1)(t-1) + qt}{(q+1)\left[q^2(s-1)(t-1)-1\right]} + 1 \right]\mathbf{J}_s\\
			&= \frac{q(t-1)-1}{(q+1)\left[q^2(s-1)(t-1)-1\right]} \mathbf{J}_s.
		\end{split}
	\end{equation*}
	Next, we derive the $(1,2)$ entry of $\mathscr{D} \mathscr{L} +\mathbf{I}_{s+t}$.
	\begin{equation*}
		\begin{split}
			(\mathscr{D} \mathscr{L} +\mathbf{I}_{s+t})_{12} &= \frac{q(q+1)(s-1) -q^2(s-1)t}{(q+1)\left[q^2(s-1)(t-1)-1\right]}\mathbf{J}_{s \times t} + \frac{1}{q+1}\mathbf{J}_{s \times t}\\
			&= \left[\frac{q(q+1)(s-1) -q^2(s-1)t}{(q+1)\left[q^2(s-1)(t-1)-1\right]} + \frac{1}{q+1} \right]\mathbf{J}_{s\times t}\\
			&= \frac{q(s-1)-1}{(q+1)\left[q^2(s-1)(t-1)-1\right]} \mathbf{J}_{s \times t}.
		\end{split}
	\end{equation*}
	Now, we evaluate the $(2,1)$ entry of $\mathscr{D} \mathscr{L} +\mathbf{I}_{s+t}$.
	\begin{equation*}
		\begin{split}
			(\mathscr{D} \mathscr{L} +\mathbf{I}_{s+t})_{21} &= \frac{-q^2s(t-1)+q(q+1)(t-1)}{(q+1)\left[q^2(s-1)(t-1)-1\right]}\mathbf{J}_{t \times s} + \frac{1}{q+1}\mathbf{J}_{t \times s}\\
			&= \left[\frac{-q^2s(t-1)+q(q+1)(t-1)}{(q+1)\left[q^2(s-1)(t-1)-1\right]} + \frac{1}{q+1} \right]\mathbf{J}_{s\times t}\\
			&= \frac{q(t-1)-1}{(q+1)\left[q^2(s-1)(t-1)-1\right]} \mathbf{J}_{s \times t}.
		\end{split}
	\end{equation*}
	Finally, we compute the $(2,2)$ entry of $\mathscr{D} \mathscr{L} +\mathbf{I}_{s+t}$.
	\begin{equation*}
		\begin{split}
			(\mathscr{D} \mathscr{L} +\mathbf{I}_{s+t})_{22} &= \frac{qs-q^2(q+1)(s-1)(t-1)}{(q+1)\left[q^2(s-1)(t-1)-1\right]}\mathbf{J}_{t} +(\mathbf{J}_t-\mathbf{I}_t) + \mathbf{I}_t\\
			&= \left[\frac{qs-q^2(q+1)(s-1)(t-1)}{(q+1)\left[q^2(s-1)(t-1)-1\right]} + 1\right]\mathbf{J}_{t}\\
			&= \frac{q(s-1)-1}{(q+1)\left[q^2(s-1)(t-1)-1\right]} \mathbf{J}_{ t}.
		\end{split}
	\end{equation*}
	Note that, the matrix $\mathds{1} \mathbf{x}^T$ can be written in the following block form:
	\[
	\mathds{1} \mathbf{x}^T = \frac{1}{(q+1)\left[q^2(s-1)(t-1)-1\right]} \begin{bmatrix}[cc]
		(q(t-1)-1)\mathbf{J}_s & (q(s-1)-1)\mathbf{J}_{s \times t}\\
		(q(t-1)-1)\mathbf{J}_{t \times s} & (q(s-1)-1)\mathbf{J}_t
	\end{bmatrix}
	\]
	and hence comparing both the matrices term by term the result follows.
\end{proof}

In the next theorem we prove the identity $\mathscr{D} \mathscr{L} +\mathbf{I}_{\mathbf{n}} = \mathds{1}_{\mathbf{n}} \mathbf{x}^T$ for any bi-block graph with at least $2$ blocks.
\begin{theorem}\label{thm:DL+I}
	Let $G$ be an \( \mathbf{n} \)-vertex bi-block graph with $r > 1$ blocks $K_{m_i,n_i}$, $i=1,2,\cdots,r$, where $\mathbf{n} = \sum_{i=1}^r (m_i+n_i)-r+1$ and $\mathscr{D}$ be the $q$-distance matrix of $G$. Let $q$ be an indeterminate that satisfies Conditions~\ref{con} and \ref{con-1}, then $$\mathscr{D} \mathscr{L} +\mathbf{I}_{\mathbf{n}} = \mathds{1}_{\mathbf{n}} \mathbf{x}^T.$$
\end{theorem}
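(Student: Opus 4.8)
The plan is to argue by induction on the number of blocks $r$, following the inductive set-up of Subsection~\ref{sec:ind}. The base case $r=1$ is exactly Lemma~\ref{lem:LD+I-single}, so I assume $r>1$ and that the identity already holds for the $(r-1)$-block subgraph $H$, i.e. $\hat{\mathscr{D}}\hat{\mathscr{L}}+\mathbf{I}_{\mathbf{m}}=\mathds{1}_{\mathbf{m}}\hat{\mathbf{x}}^T$. Since $q$ satisfies Condition~\ref{con-1}, Remark~\ref{rem:det-not0} guarantees that $\hat{\mathscr{D}}$ is invertible, so Lemma~\ref{lem:JL} may legitimately be applied to $H$, yielding $\mathbf{J}_{(s-1)\times\mathbf{m}}\hat{\mathscr{L}}=(1-q)\mathds{1}_{s-1}\hat{\mathbf{x}}^T$ and $\mathbf{J}_{t\times\mathbf{m}}\hat{\mathscr{L}}=(1-q)\mathds{1}_{t}\hat{\mathbf{x}}^T$; this also explains why the apparently circular Lemma~\ref{lem:JL} is harmless, as it is invoked only for the smaller graph whose relation is the induction hypothesis.

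First I would assemble $\mathscr{L}$ in the same $3\times 3$ block form (block sizes $\mathbf{m}$, $s-1$, $t$) as $\mathscr{D}$, by substituting the block forms of $\mathbf{A}$, $\mathbf{B}$, $\textup{diag}(\mathbf{y})$ and $\mathbf{I}$ from Subsection~\ref{sec:ind} into Equation~\eqref{eqn:L}. Writing $\kappa=q^2(s-1)(t-1)-1$, the $(1,1)$ block is $\hat{\mathscr{L}}$ plus a rank-one correction $-\tfrac{q^2}{q+1}\bigl(\tfrac{t-1}{\kappa}-1\bigr)E_{\mathbf{m}\mathbf{m}}$ supported at the cut-vertex, and the remaining eight blocks are explicit scalar multiples of $E_1,E_2,\mathbf{J}$ and $\mathbf{I}$. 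The core of the proof is then to expand the nine blocks of $\mathscr{D}\mathscr{L}$, add $\mathbf{I}_{\mathbf{n}}$, and match each against the corresponding block of $\mathds{1}_{\mathbf{n}}\mathbf{x}^T=[\mathds{1}_{(i)}\mathbf{x}_{(j)}^T]$, where the three pieces of $\mathbf{x}$ are read off from Equation~\eqref{eqn:x}.

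The simplification rests on four families of identities: the induction hypothesis $\hat{\mathscr{D}}\hat{\mathscr{L}}=\mathds{1}_{\mathbf{m}}\hat{\mathbf{x}}^T-\mathbf{I}_{\mathbf{m}}$ for every product with a leading $\hat{\mathscr{D}}$; the two $\mathbf{J}\hat{\mathscr{L}}$ identities from Lemma~\ref{lem:JL}; Lemma~\ref{lem:E1-DL} and its $E_2$-analogue for the products $E_1^T\hat{\mathscr{D}}\hat{\mathscr{L}}$ and $E_2^T\hat{\mathscr{D}}\hat{\mathscr{L}}$, together with the vanishing relations $E_1^T\hat{\mathscr{D}}E_{\mathbf{m}\mathbf{m}}=\mathbf{0}=E_2^T\hat{\mathscr{D}}E_{\mathbf{m}\mathbf{m}}$; and the elementary products $E_1E_1^T=(s-1)E_{\mathbf{m}\mathbf{m}}$, $E_2E_2^T=tE_{\mathbf{m}\mathbf{m}}$, $\mathbf{J}_{\mathbf{m}\times(s-1)}E_1^T=(s-1)\mathds{1}_{\mathbf{m}}\mathbf{e}_{\mathbf{m}}^T$, $\mathbf{J}_{\mathbf{m}\times t}E_2^T=t\mathds{1}_{\mathbf{m}}\mathbf{e}_{\mathbf{m}}^T$ and the relations of Remark~\ref{rem:identities}. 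For the lower-right $2\times 2$ blocks (rows and columns $2,3$) the $\hat{\mathscr{D}}$-contributions collapse to $\mathbf{0}$ through $E_i^T\hat{\mathscr{D}}E_i$ and the $\mathbf{J}$-identities, after which the computation reduces verbatim to the single-block calculation already performed in Lemma~\ref{lem:LD+I-single}.

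The main obstacle I expect is the $(1,1)$ block together with the $(1,2)$ and $(1,3)$ cross-blocks, where a leading $\hat{\mathscr{D}}$ multiplies both $\hat{\mathscr{L}}$ and the matrices $E_1,E_2$, forcing one to invoke the induction hypothesis and the $E$-identities simultaneously. The delicate point is that after combining the $\hat{\mathscr{D}}E_{\mathbf{m}\mathbf{m}}$-terms their total coefficient is $\tfrac{q^2}{q+1}\bigl(1+\tfrac{1-q^2(s-1)(t-1)}{\kappa}\bigr)=0$, so they cancel, while the remaining $\mathds{1}_{\mathbf{m}}\mathbf{e}_{\mathbf{m}}^T$-terms must reproduce precisely the cut-vertex adjustment $\tfrac{q(t-1)-1}{(q+1)\kappa}-1$ built into $\mathbf{x}(v_{\mathbf{m}})$ in Equation~\eqref{eqn:x}, i.e. one must verify $-q^2(s-1)(t-1)+\tfrac{qt}{q+1}=q(t-1)-1-(q+1)\kappa$ divided by the appropriate factor. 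Once this bookkeeping at the cut-vertex is checked, all scalar coefficients collapse to the closed forms of Equation~\eqref{eqn:x} and the nine block identities hold, completing the induction.
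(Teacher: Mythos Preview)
Your proposal is correct and follows essentially the same route as the paper: induction on $r$ with base case Lemma~\ref{lem:LD+I-single}, a $3\times 3$ block expansion of $\mathscr{D}\mathscr{L}+\mathbf{I}_{\mathbf{n}}$ conformal with the decomposition of Subsection~\ref{sec:ind}, and term-by-term simplification using the induction hypothesis, Lemma~\ref{lem:JL} applied to $H$, Lemma~\ref{lem:E1-DL} and its $E_2$-analogue, and the elementary identities of Remark~\ref{rem:identities}. Your observation that Lemma~\ref{lem:JL} is only ever invoked for the smaller graph $H$ (whose identity is the induction hypothesis) is exactly how the paper uses it, and your identification of the $\hat{\mathscr{D}}E_{\mathbf{m}\mathbf{m}}$-cancellation and the cut-vertex bookkeeping as the delicate points matches the paper's computations in the $(1,1)$, $(2,1)$ and $(3,1)$ blocks.
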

\begin{proof}
	We establish the result by induction on the number of blocks $r$. For the base case $r=1$, when $G=K_{s,t}$, the claim follows directly from Lemma~\ref{lem:LD+I-single}. For the inductive step, assume that the statement holds for all bi-block graphs with $r-1$ blocks; in particular, it is valid for $H$, \textit{i.e.},
	\begin{equation}\label{eqn:LD+I-H}
		\hat{\mathscr{D}} \hat{\mathscr{L}} +\mathbf{I}_{\mathbf{m}} = \mathds{1}_{\mathbf{m}} \hat{\mathbf{x}}^T.
	\end{equation}
	We partition the matrix $\mathscr{D} \mathscr{L} +\mathbf{I}_{\mathbf{n}}$ into a $3 \times 3$ block matrix, conformally to the partition of $\mathscr{D}$ and $\mathscr{L}$ and compute each of the entries to prove the result. We start by finding the $(1,1)$ entry of  $\mathscr{D} \mathscr{L} +\mathbf{I}_{\mathbf{n}}$.
	\begin{equation*} 
		\begin{split}
			&\quad (\mathscr{D} \mathscr{L} +\mathbf{I}_{\mathbf{n}})_{11}\\
			&= \frac{q}{q+1} (\mathscr{D}\mathbf{A})_{11} - \frac{q^2}{q+1} (\mathscr{D}\mathbf{B})_{11} -  \frac{q^2}{q+1} (\mathscr{D}\textup{diag}(\mathbf{y}))_{11} + \frac{1}{q+1} \mathscr{D}_{11} + \mathbf{I}_{\mathbf{m}}\\
			&= \frac{q}{q+1} \left[\hat{\mathscr{D}} \hat{\mathbf{A}} + \frac{(\mathbf{J}_{\mathbf{m} \times t} + q \hat{\mathscr{D}} E_2) E_2^T}{q^2(s-1)(t-1) - 1} \right] - \frac{q^2}{q+1} \left[\hat{\mathscr{D}} \hat{\mathbf{B}} + \frac{(t-1)((q+1)\mathbf{J}_{\mathbf{m} \times (s-1)} + q^2\hat{\mathscr{D}} E_1) E_1^T}{q^2(s-1)(t-1) - 1}\right]\\
			&\quad - \frac{q^2}{q+1} \left[ \hat{\mathscr{D}} \textup{diag}(\hat{\mathbf{y}}) + \left(\frac{t-1}{q^2(s-1)(t-1)-1} - 1\right) \hat{\mathscr{D}} E_{\mathbf{m}\mathbf{m}}\right] + \frac{1}{q+1} \hat{\mathscr{D}} + \mathbf{I}_{\mathbf{m}}\\
			&= \frac{q}{q+1} \left[\hat{\mathscr{D}} \hat{\mathbf{A}} + \frac{t E_0^T + tq \hat{\mathscr{D}} E_{\mathbf{m}\mathbf{m}}}{q^2(s-1)(t-1) - 1} \right] - \frac{q^2}{q+1} \left[\hat{\mathscr{D}} \hat{\mathbf{B}} + \frac{(t-1)((q+1)(s-1)E_0^T + q^2(s-1) \hat{\mathscr{D}}E_{\mathbf{m}\mathbf{m}}) }{q^2(s-1)(t-1) - 1}\right]\\
			&\quad - \frac{q^2}{q+1} \left[ \hat{\mathscr{D}} \textup{diag}(\hat{\mathbf{y}}) + \left(\frac{t-1}{q^2(s-1)(t-1)-1} - 1\right) \hat{\mathscr{D}} E_{\mathbf{m}\mathbf{m}}\right] + \frac{1}{q+1} \hat{\mathscr{D}} + \mathbf{I}_{\mathbf{m}}.
		\end{split}
	\end{equation*}
	Now, collecting all the terms containing $\hat{\mathscr{D}}$, we have 
	\[
	\frac{q}{q+1} \hat{\mathscr{D}} \hat{\mathbf{A}} - \frac{q^2}{q+1} \hat{\mathscr{D}} \hat{\mathbf{B}}- \frac{q^2}{q+1} \hat{\mathscr{D}} \textup{diag}(\hat{\mathbf{y}}) + \frac{1}{q+1} \hat{\mathscr{D}} \overset{\eqref{eqn:L}}{=}  \hat{\mathscr{D}} \hat{\mathscr{L}}.
	\]
	Let us focus on the two terms containing $E_0^T$, which leads to
	\begin{equation*} 
		\begin{split}
			&\quad \left[\frac{q}{q+1} \cdot \frac{t}{q^2(s-1)(t-1)-1} - \frac{q^2}{q+1} \cdot \frac{(q+1)(t-1)(s-1)}{q^2(s-1)(t-1)-1} \right] E_0^T \\
			&= \left[\frac{q(t-1)-1}{(q+1)(q^2(s-1)(t-1)-1)} -1\right]E_0^T.
		\end{split}
	\end{equation*}
	Finally, grouping all the terms that contains $\hat{\mathscr{D}} E_{\mathbf{m}\mathbf{m}}$, we have
	\begin{equation*} 
		\begin{split}
			&\quad \frac{q^2}{q+1} \left[\frac{t-q^2(s-1)(t-1)-t+1}{q^2(s-1)(t-1)-1} +1 \right]\hat{\mathscr{D}} E_{\mathbf{m}\mathbf{m}} \\
			&= \frac{q^2}{q+1} \left[\frac{1-q^2(s-1)(t-1)}{q^2(s-1)(t-1)-1} +1 \right]\hat{\mathscr{D}} E_{\mathbf{m}\mathbf{m}} = \mathbf{0}.
		\end{split}
	\end{equation*}
	Thus, combining all of the above expressions we have
	\begin{equation*} 
		\begin{split}
			\quad (\mathscr{D} \mathscr{L} +\mathbf{I}_{\mathbf{n}})_{11}
			&= \hat{\mathscr{D}} \hat{\mathscr{L}} +\mathbf{I}_{\mathbf{m}} + \left[\frac{q(t-1)-1}{(q+1)(q^2(s-1)(t-1)-1)} -1\right]E_0^T\\
			&\overset{\eqref{eqn:LD+I-H}}{=} \mathds{1}_{\mathbf{m}} \hat{\mathbf{x}}^T + \left[\frac{q(t-1)-1}{(q+1)(q^2(s-1)(t-1)-1)} -1\right]E_0^T.
		\end{split}
	\end{equation*}
	Next, we evaluate the $(1,2)$ entry of $\mathscr{D} \mathscr{L} +\mathbf{I}_{\mathbf{n}}$.
	\begin{equation*} 
		\begin{split}
			&\quad (\mathscr{D} \mathscr{L} +\mathbf{I}_{\mathbf{n}})_{12}\\
			&= \frac{q}{q+1} (\mathscr{D}\mathbf{A})_{12} - \frac{q^2}{q+1} (\mathscr{D}\mathbf{B})_{12}-  \frac{q^2}{q+1} (\mathscr{D}\textup{diag}(\mathbf{y}))_{12} + \frac{1}{q+1} \mathscr{D}_{12}\\
			&= \frac{q}{q+1} \left[ \frac{t \mathbf{J}_{\mathbf{m} \times (s-1)} + tq \hat{\mathscr{D}} E_1}{q^2(s-1)(t-1)-1}\right]\\
			&\quad  - \frac{q^2}{q+1} \left[ \frac{(t-1)\hat{\mathscr{D}} E_1 + (q+1)(s-2)(t-1) \mathbf{J}_{\mathbf{m} \times (s-1)} + q^2 (s-2)(t-1)\hat{\mathscr{D}}E_1}{q^2(s-1)(t-1)-1}\right]\\
			&\quad - \frac{q^2}{q+1} \left[\frac{(q+1)(t-1)\mathbf{J}_{\mathbf{m} \times (s-1)} +q^2(t-1)\hat{\mathscr{D}}E_1}{q^2(s-1)(t-1)-1}\right] + \frac{1}{q+1} \left[(q+1)\mathbf{J}_{\mathbf{m} \times (s-1)} + q^2\hat{\mathscr{D}}E_1 \right].
		\end{split}
	\end{equation*}
	Let us group the terms containing $\hat{\mathscr{D}}E_1$, which leads to
	\begin{equation*} 
		\begin{split}
			&\quad \left[\frac{q}{q+1} \cdot \frac{qt}{q^2(s-1)(t-1)-1} - \frac{q^2}{q+1} \cdot \frac{(t-1) + q^2(s-2)(t-1)}{q^2(s-1)(t-1)-1} \right. \\
			&\quad \qquad \left. - \frac{q^2}{q+1} \cdot \frac{q^2(t-1)}{q^2(s-1)(t-1)-1} + \frac{q^2}{q+1} \right]\hat{\mathscr{D}}E_1\\
			&= \frac{q^2}{q+1} \left[\frac{t-(t-1)-q^2(s-2)(t-1)-q^2(t-1)}{q^2(s-1)(t-1)-1} +1 \right]\hat{\mathscr{D}}E_1\\
			&= \frac{q^2}{q+1} \left[\frac{1-q^2(s-1)(t-1)}{q^2(s-1)(t-1)-1} +1 \right]\hat{\mathscr{D}}E_1 = \mathbf{0}.
		\end{split}
	\end{equation*}
	Next, focusing on all the term containing $ \mathbf{J}_{\mathbf{m} \times (s-1)}$, we have
	\begin{equation*} 
		\begin{split}
			&\quad \left[\frac{q}{q+1} \cdot \frac{t}{q^2(s-1)(t-1)-1} - \frac{q^2}{q+1} \cdot \frac{(q+1)(s-2)(t-1)}{q^2(s-1)(t-1)-1} \right. \\
			&\quad \qquad \left. - \frac{q^2}{q+1} \cdot \frac{(q+1)(t-1)}{q^2(s-1)(t-1)-1} +1\right]\mathbf{J}_{\mathbf{m} \times (s-1)}\\
			&= \left[\frac{qt}{(q+1)(q^2(s-1)(t-1)-1)} - \frac{1}{q^2(s-1)(t-1)-1}\right]\mathbf{J}_{\mathbf{m} \times (s-1)}\\
			&= \frac{q(t-1)-1}{(q+1)(q^2(s-1)(t-1)-1)} \mathbf{J}_{\mathbf{m} \times (s-1)}.
		\end{split}
	\end{equation*}
	Thus, we have 
	\[
	(\mathscr{D} \mathscr{L} +\mathbf{I}_{\mathbf{n}})_{12} = \frac{q(t-1)-1}{(q+1)(q^2(s-1)(t-1)-1)} \mathbf{J}_{\mathbf{m} \times (s-1)}.
	\]\\
	At this point, we proceed to determine the $(1,3)$ entry $\mathscr{D} \mathscr{L} +\mathbf{I}_{\mathbf{n}}$.
	\begin{equation*} 
		\begin{split}
			(\mathscr{D} \mathscr{L} +\mathbf{I}_{\mathbf{n}})_{13}
			&= \frac{q}{q+1} (\mathscr{D}\mathbf{A})_{13} - \frac{q^2}{q+1} (\mathscr{D}\mathbf{B})_{13} -  \frac{q^2}{q+1} (\mathscr{D}\textup{diag}(\mathbf{y}))_{13} + \frac{1}{q+1} \mathscr{D}_{13}\\
			&= \frac{q}{q+1} \left[\frac{\hat{\mathscr{D}}E_2 + (q+1)(s-1)\mathbf{J}_{\mathbf{m} \times t} +q^2 (s-1)\hat{\mathscr{D}}E_2}{q^2(s-1)(t-1)-1}\right]\\
			&\quad - \frac{q^2}{q+1} \left[\frac{(s-1)(t-1)\mathbf{J}_{\mathbf{m} \times t} + q(s-1)(t-1)\hat{\mathscr{D}}E_2}{q^2(s-1)(t-1)-1}\right]\\
			&\quad - \frac{q^2}{q+1} \left[\frac{(s-1)\mathbf{J}_{\mathbf{m} \times t} + q(s-1)\hat{\mathscr{D}}E_2}{q^2(s-1)(t-1)-1}\right] + \frac{1}{q+1} \left[\mathbf{J}_{\mathbf{m} \times t}+q\hat{\mathscr{D}}E_2\right].
		\end{split}
	\end{equation*}
	First, we collect all the terms that contain $ \mathbf{J}_{\mathbf{m} \times t}$, which leads to
	\begin{equation*} 
		\begin{split}
			&\quad \left[\frac{q}{q+1} \cdot \frac{(q+1)(s-1)}{q^2(s-1)(t-1)-1} - \frac{q^2}{q+1} \cdot \frac{(s-1)(t-1)}{q^2(s-1)(t-1)-1} \right. \\
			&\quad \qquad \left. - \frac{q^2}{q+1} \cdot\frac{s-1}{q^2(s-1)(t-1)-1}+ \frac{1}{q+1} \right]\mathbf{J}_{\mathbf{m} \times t}\\
			&= \frac{1}{q+1} \left[\frac{q(q+1)(s-1) -q^2(t-1)(s-1) -q^2(s-1)}{q^2(s-1)(t-1)-1} +1\right]\mathbf{J}_{\mathbf{m} \times t}\\
			&= \frac{q(s-1)-1}{(q+1)(q^2(s-1)(t-1)-1)} \mathbf{J}_{\mathbf{m} \times t}.
		\end{split}
	\end{equation*}
	Next, we collect all the terms that contain $\hat{\mathscr{D}}E_2$ and we have
	\begin{equation*} 
		\begin{split}
			&\quad \left[\frac{q}{q+1} \cdot \frac{1+q^2(s-1)}{q^2(s-1)(t-1)-1} - \frac{q^2}{q+1} \cdot \frac{q(s-1)(t-1)}{q^2(s-1)(t-1)-1}  \right. \\
			&\quad \qquad \left. - \frac{q^2}{q+1} \cdot \frac{q(s-1)}{q^2(s-1)(t-1)-1}  + \frac{q}{q+1}\right]\hat{\mathscr{D}}E_2\\
			&= \frac{q}{q+1} \left[\frac{q^2(s-1)+1-q^2(s-1)(t-1)-q^2(s-1)}{q^2(s-1)(t-1)-1} +1\right]\hat{\mathscr{D}}E_2\\
			&= \frac{q}{q+1} \left[\frac{1-q^2(s-1)(t-1)}{q^2(s-1)(t-1)-1} +1\right]\hat{\mathscr{D}}E_2 = \mathbf{0}.
		\end{split}
	\end{equation*}
	Thus, we have
	\[
	(\mathscr{D} \mathscr{L} +\mathbf{I}_{\mathbf{n}})_{13} = \frac{q(s-1)-1}{(q+1)(q^2(s-1)(t-1)-1)} \mathbf{J}_{\mathbf{m} \times t}.
	\]\\
	We now carry out the computation of the $(2,1)$ entry $\mathscr{D} \mathscr{L} +\mathbf{I}_{\mathbf{n}}$.
	\begin{equation*} 
		\begin{split}
			(\mathscr{D} \mathscr{L} +\mathbf{I}_{\mathbf{n}})_{21}
			&= \frac{q}{q+1} (\mathscr{D}\mathbf{A})_{21} - \frac{q^2}{q+1} (\mathscr{D}\mathbf{B})_{21} -  \frac{q^2}{q+1} (\mathscr{D}\textup{diag}(\mathbf{y}))_{21} + \frac{1}{q+1} \mathscr{D}_{21}\\
			&= \left[(q+1) \mathbf{J}_{(s-1) \times \mathbf{m}} +q^2 E_1^T \hat{\mathscr{D}}\right] \cdot \left[\frac{q}{q+1}\hat{\mathbf{A}} - \frac{q^2}{q+1} \hat{\mathbf{B}}- \frac{q^2}{q+1}\textup{diag}(\hat{\mathbf{y}}) \right.  \\
			&\qquad \left. - \frac{q^2}{q+1} \left(\dfrac{t-1}{q^2(s-1)(t-1)-1} -1\right)E_{\mathbf{m} \mathbf{m}}  + \frac{1}{q+1} \mathbf{I}_{\mathbf{m}}\right]\\
			&\quad +\frac{q}{q+1} \cdot \frac{\mathbf{J}_{(s-1) \times t} E_2^T}{q^2(s-1)(t-1)-1} - \frac{q^2}{q+1} \cdot \frac{(q+1)(t-1)(\mathbf{J}_{s-1} - \mathbf{I}_{s-1}) E_1^T}{q^2(s-1)(t-1)-1}.
		\end{split}
	\end{equation*}
	Simplifying we get 
	\begin{equation*} 
		\begin{split}
			(\mathscr{D} \mathscr{L} +\mathbf{I}_{\mathbf{n}})_{21}
			&=\left[(q+1) \mathbf{J}_{(s-1) \times \mathbf{m}} +q^2 E_1^T \hat{\mathscr{D}}\right] \cdot \left[\frac{q}{q+1}\hat{\mathbf{A}} - \frac{q^2}{q+1} \hat{\mathbf{B}}- \frac{q^2}{q+1}\textup{diag}(\hat{\mathbf{y}}) \right.  \\
			&\qquad \left. - \frac{q^2}{q+1} \left(\dfrac{t-1}{q^2(s-1)(t-1)-1} -1\right)E_{\mathbf{m} \mathbf{m}}  + \frac{1}{q+1} \mathbf{I}_{\mathbf{m}}\right]\\
			&\quad +\frac{q}{q+1} \cdot \frac{t E_1^T}{q^2(s-1)(t-1)-1} - \frac{q^2}{q+1} \cdot \frac{(q+1)(s-2)(t-1)E_1^T}{q^2(s-1)(t-1)-1}.
		\end{split}
	\end{equation*}
	Now, using the fact that $\mathbf{J}_{(s-1) \times \mathbf{m}}E_{\mathbf{m} \mathbf{m}} = E_1^T$ from Remark~\ref{rem:identities} and collecting the terms containing only $E_1^T$, we have
	\begin{equation*} 
		\begin{split}
			&\quad \left[\frac{q}{q+1} \cdot \frac{t}{q^2(s-1)(t-1)-1} - \frac{q^2}{q+1} \cdot \frac{(q+1)(s-2)(t-1)}{q^2(s-1)(t-1)-1}  \right. \\
			&\quad \qquad \left. - q^2 \left(\dfrac{t-1}{q^2(s-1)(t-1)-1} -1\right) \right]E_1^T\\
			&= \left[\frac{qt-q^2(q+1)(s-2)(t-1) -q^2(q+1)(t-1)}{(q+1)(q^2(s-1)(t-1)-1)} \right]E_1^T + q^2E_1^T\\
			&= \left[\frac{q(t-1) -1}{(q+1)(q^2(s-1)(t-1)-1)} -1 \right]E_1^T + q^2E_1^T
		\end{split}
	\end{equation*}
	Let us focus on the remaining terms that contain $\mathbf{J}_{(s-1) \times \mathbf{m}}$. This leads to
	\begin{equation*} 
	(q+1) \mathbf{J}_{(s-1) \times \mathbf{m}} \left[ \frac{q}{q+1}\hat{\mathbf{A}} - \frac{q^2}{q+1} \hat{\mathbf{B}}- \frac{q^2}{q+1}\textup{diag}(\hat{\mathbf{y}}) + \frac{1}{q+1} \mathbf{I}_{\mathbf{m}}\right] \overset{\eqref{eqn:L}}{=}  (q+1) \mathbf{J}_{(s-1) \times \mathbf{m}} \hat{\mathscr{L}}.
	\end{equation*}
	Then, using Lemma~\ref{lem:JL}, we have $(q+1) \mathbf{J}_{(s-1) \times \mathbf{m}} \hat{\mathscr{L}} = - (q^2-1) \mathds{1}_{s-1} \hat{\mathbf{x}}^T.$ Further, using Lemma~\ref{lem:E1-DL} and grouping the remaining terms of $E_1^T \hat{\mathscr{D}}$, we have
	\[
	q^2 E_1^T \hat{\mathscr{D}} \left[ \frac{q}{q+1}\hat{\mathbf{A}} - \frac{q^2}{q+1} \hat{\mathbf{B}}- \frac{q^2}{q+1}\textup{diag}(\hat{\mathbf{y}}) + \frac{1}{q+1} \mathbf{I}_{\mathbf{m}}\right] \overset{\eqref{eqn:L}}{=} q^2 E_1^T \hat{\mathscr{D}} \hat{\mathscr{L}}.
	\]
	Thus, combining all the parts above we have
	\begin{equation*} 
		\begin{split}
			&\quad (\mathscr{D} \mathscr{L} +\mathbf{I}_{\mathbf{n}})_{21} \\
			&= \left[\frac{q(t-1) -1}{(q+1)(q^2(s-1)(t-1)-1)} -1 \right]E_1^T + q^2E_1^T - (q^2-1) \mathds{1}_{s-1} \hat{\mathbf{x}}^T + q^2 E_1^T \hat{\mathscr{D}} \hat{\mathscr{L}}\\
			&= \left[\frac{q(t-1) -1}{(q+1)(q^2(s-1)(t-1)-1)} -1 \right]E_1^T + q^2E_1^T \left(\mathbf{I}_{\mathbf{m}} + \hat{\mathscr{D}} \hat{\mathscr{L}}\right)  - (q^2-1) \mathds{1}_{s-1} \hat{\mathbf{x}}^T\\
			&\overset{\eqref{eqn:E1-DL}}{=}\left[\frac{q(t-1) -1}{(q+1)(q^2(s-1)(t-1)-1)} -1 \right]E_1^T + q^2 \mathds{1}_{s-1} \hat{\mathbf{x}}^T - (q^2-1) \mathds{1}_{s-1} \hat{\mathbf{x}}^T\\
			&= \left[\frac{q(t-1) -1}{(q+1)(q^2(s-1)(t-1)-1)} -1 \right]E_1^T +  \mathds{1}_{s-1} \hat{\mathbf{x}}^T.
		\end{split}
	\end{equation*}\\
	Next, we evaluate the $(2,2)$ entry $\mathscr{D} \mathscr{L} +\mathbf{I}_{\mathbf{n}}$.
	\begin{equation*} 
		\begin{split}
			(\mathscr{D} \mathscr{L} +\mathbf{I}_{\mathbf{n}})_{22}
			&= \frac{q}{q+1} (\mathscr{D}\mathbf{A})_{22} - \frac{q^2}{q+1} (\mathscr{D}\mathbf{B})_{22} -  \frac{q^2}{q+1} (\mathscr{D}\textup{diag}(\mathbf{y}))_{22} + \frac{1}{q+1} \mathscr{D}_{22} + \mathbf{I}_{s-1}\\
			&=\frac{q}{q+1} \cdot \frac{t \mathbf{J}_{s-1}}{q^2(s-1)(t-1)-1} -  \frac{q^2}{q+1} \cdot \frac{(q+1)(t-1)\left((s-2) \mathbf{J}_{s-1} + \mathbf{I}_{s-1}\right)}{q^2(s-1)(t-1)-1}\\
			&\quad - \frac{q^2}{q+1} \cdot \frac{(q+1)(t-1)\left(\mathbf{J}_{s-1} - \mathbf{I}_{s-1}\right)}{q^2(s-1)(t-1)-1} + \mathbf{J}_{s-1}.\\
		\end{split}
	\end{equation*}
	Note that the terms containing $\mathbf{I}_{s-1}$ cancel each other. Now, let us group the terms containing $\mathbf{J}_{s-1}$, which leads to
	\begin{equation*} 
		\begin{split}
			& \left[\frac{q}{q+1} \cdot \frac{t}{q^2(s-1)(t-1)-1} -  \frac{q^2}{q+1} \cdot \frac{(q+1)(t-1)(s-2)}{q^2(s-1)(t-1)-1}\right. \\
			&\quad \left. - \frac{q^2}{q+1} \cdot \frac{(q+1)(t-1)}{q^2(s-1)(t-1)-1} + 1 \right]\mathbf{J}_{s-1}\\
			&= \frac{q(t-1) -1}{(q+1)(q^2(s-1)(t-1)-1)}\mathbf{J}_{s-1}.
		\end{split}
	\end{equation*}
	Thus, we have
	\[
	(\mathscr{D} \mathscr{L} +\mathbf{I}_{\mathbf{n}})_{22} = \frac{q(t-1) -1}{(q+1)(q^2(s-1)(t-1)-1)} \mathbf{J}_{s-1}.
	\]
	We now find the $(2,3)$ entry $\mathscr{D} \mathscr{L} +\mathbf{I}_{\mathbf{n}}$.
	\begin{equation*} 
		\begin{split}
			(\mathscr{D} \mathscr{L} +\mathbf{I}_{\mathbf{n}})_{23}
			&= \frac{q}{q+1} (\mathscr{D}\mathbf{A})_{23} - \frac{q^2}{q+1} (\mathscr{D}\mathbf{B})_{23} -  \frac{q^2}{q+1} (\mathscr{D}\textup{diag}(\mathbf{y}))_{23} + \frac{1}{q+1} \mathscr{D}_{23}\\
			&=\frac{q}{q+1} \cdot \frac{(q+1)(s-2)}{q^2(s-1)(t-1)-1} \mathbf{J}_{(s-1) \times t} -  \frac{q^2}{q+1} \cdot \frac{(s-1)(t-1)}{q^2(s-1)(t-1)-1}\mathbf{J}_{(s-1) \times t} \\
			&\quad - \frac{q^2}{q+1} \cdot \frac{s-1}{q^2(s-1)(t-1)-1}\mathbf{J}_{(s-1) \times t} + \frac{1}{q+1} \mathbf{J}_{(s-1) \times t}\\
			&= \frac{1}{q+1} \left[\frac{q(q+1)(s-2) -q^2(s-1)(t-1) -q^2(s-1)}{q^2(s-1)(t-1)-1} + 1\right] \mathbf{J}_{(s-1) \times t}\\
			&= \frac{q(t-1) -1}{(q+1)(q^2(s-1)(t-1)-1)} \mathbf{J}_{(s-1) \times t}.
		\end{split}
	\end{equation*}
	At this point, we proceed to determine the $(3,1)$ entry $\mathscr{D} \mathscr{L} +\mathbf{I}_{\mathbf{n}}$.
	\begin{equation*} 
		\begin{split}
			(\mathscr{D} \mathscr{L} +\mathbf{I}_{\mathbf{n}})_{31}
			&= \frac{q}{q+1} (\mathscr{D}\mathbf{A})_{31} - \frac{q^2}{q+1} (\mathscr{D}\mathbf{B})_{31} -  \frac{q^2}{q+1} (\mathscr{D}\textup{diag}(\mathbf{y}))_{31} + \frac{1}{q+1} \mathscr{D}_{31}\\
			&= \left[\mathbf{J}_{t \times \mathbf{m}} +q E_2^T \hat{\mathscr{D}}\right] \cdot \left[\frac{q}{q+1}\hat{\mathbf{A}} - \frac{q^2}{q+1} \hat{\mathbf{B}}- \frac{q^2}{q+1}\textup{diag}(\hat{\mathbf{y}}) \right.  \\
			&\qquad \left. - \frac{q^2}{q+1} \left(\dfrac{t-1}{q^2(s-1)(t-1)-1} -1\right)E_{\mathbf{m} \mathbf{m}}  + \frac{1}{q+1} \mathbf{I}_{\mathbf{m}}\right]\\
			&\quad +\frac{q}{q+1} \cdot \frac{(q+1)(t-1)}{q^2(s-1)(t-1)-1} E_2^T- \frac{q^2}{q+1} \cdot \frac{(s-1)(t-1)}{q^2(s-1)(t-1)-1} E_2^T.
		\end{split}
	\end{equation*}
	Now, using the fact that $\mathbf{J}_{t \times \mathbf{m}}E_{\mathbf{m} \mathbf{m}} = E_2^T$ from Remark~\ref{rem:identities} and collecting the terms containing only $E_2^T$, we have
	\begin{equation*} 
		\begin{split}
			&\quad \left[\frac{q}{q+1} \cdot \frac{(q+1)(t-1)}{q^2(s-1)(t-1)-1} - \frac{q^2}{q+1} \cdot \frac{(s-1)(t-1)}{q^2(s-1)(t-1)-1}  \right. \\
			&\quad \qquad \left. - \frac{q^2}{q+1} \left(\dfrac{t-1}{q^2(s-1)(t-1)-1} -1\right) \right]E_2^T\\
			&= \left[\frac{q(q+1)(t-1)-q^2(s-1)(t-1) -q^2(t-1)}{(q+1)(q^2(s-1)(t-1)-1)} + \frac{1}{q+1} \right]E_2^T + \frac{q^2-1}{q+1}E_2^T\\
			&= \left[\frac{q(t-1) -1}{(q+1)(q^2(s-1)(t-1)-1)} -1 \right]E_2^T + qE_2^T.
		\end{split}
	\end{equation*}
	Now, let us focus on the remaining terms that contain $\mathbf{J}_{t \times \mathbf{m}}$, which leads to
	\begin{equation*} 
		\mathbf{J}_{t \times \mathbf{m}} \left[ \frac{q}{q+1}\hat{\mathbf{A}} - \frac{q^2}{q+1} \hat{\mathbf{B}}- \frac{q^2}{q+1}\textup{diag}(\hat{\mathbf{y}}) + \frac{1}{q+1} \mathbf{I}_{\mathbf{m}}\right] \overset{\eqref{eqn:L}}{=}  \mathbf{J}_{t \times \mathbf{m}} \hat{\mathscr{L}}.
	\end{equation*}
	Then, using Lemma~\ref{lem:JL}, we have $\mathbf{J}_{t \times \mathbf{m}} \hat{\mathscr{L}} = (1-q) \mathds{1}_{t} \hat{\mathbf{x}}^T.$ Further, using Lemma~\ref{lem:E1-DL} and grouping the remaining terms of $E_2^T \hat{\mathscr{D}}$, we have
	\[
	q E_2^T \hat{\mathscr{D}} \left[ \frac{q}{q+1}\hat{\mathbf{A}} - \frac{q^2}{q+1} \hat{\mathbf{B}}- \frac{q^2}{q+1}\textup{diag}(\hat{\mathbf{y}}) + \frac{1}{q+1} \mathbf{I}_{\mathbf{m}}\right] \overset{\eqref{eqn:L}}{=} q E_2^T \hat{\mathscr{D}} \hat{\mathscr{L}}.
	\]
	Thus, combining all the parts above we have
	\begin{equation*} 
		\begin{split}
			&\quad (\mathscr{D} \mathscr{L} +\mathbf{I}_{\mathbf{n}})_{31} \\
			&= \left[\frac{q(t-1) -1}{(q+1)(q^2(s-1)(t-1)-1)} -1 \right]E_2^T + qE_2^T - (q-1) \mathds{1}_{t} \hat{\mathbf{x}}^T + q E_2^T \hat{\mathscr{D}} \hat{\mathscr{L}}\\
			&= \left[\frac{q(t-1) -1}{(q+1)(q^2(s-1)(t-1)-1)} -1 \right]E_2^T + qE_2^T \left(\mathbf{I}_{\mathbf{m}} + \hat{\mathscr{D}} \hat{\mathscr{L}}\right)  - (q-1) \mathds{1}_{t} \hat{\mathbf{x}}^T\\
			&\overset{\eqref{eqn:E2-DL}}{=}\left[\frac{q(t-1) -1}{(q+1)(q^2(s-1)(t-1)-1)} -1 \right]E_2^T + q^2 \mathds{1}_{t} \hat{\mathbf{x}}^T - (q-1) \mathds{1}_{t} \hat{\mathbf{x}}^T\\
			&= \left[\frac{q(t-1) -1}{(q+1)(q^2(s-1)(t-1)-1)} -1 \right]E_2^T +  \mathds{1}_{t} \hat{\mathbf{x}}^T.
		\end{split}
	\end{equation*}
	Subsequently, we compute the $(3,2)$ entry $\mathscr{D} \mathscr{L} +\mathbf{I}_{\mathbf{n}}$.
	\begin{equation*} 
		\begin{split}
			(\mathscr{D} \mathscr{L} +\mathbf{I}_{\mathbf{n}})_{32}
			&= \frac{q}{q+1} (\mathscr{D}\mathbf{A})_{32} - \frac{q^2}{q+1} (\mathscr{D}\mathbf{B})_{32} -  \frac{q^2}{q+1} (\mathscr{D}\textup{diag}(\mathbf{y}))_{32} + \frac{1}{q+1} \mathscr{D}_{32}\\
			&=\frac{q}{q+1} \cdot \frac{(q+1)(t-1)}{q^2(s-1)(t-1)-1} \mathbf{J}_{t \times (s-1)} -  \frac{q^2}{q+1} \cdot \frac{(s-1)(t-1)}{q^2(s-1)(t-1)-1} \mathbf{J}_{t \times (s-1)} \\
			&\quad - \frac{q^2}{q+1} \cdot \frac{t-1}{q^2(s-1)(t-1)-1} \mathbf{J}_{t \times (s-1)} + \frac{1}{q+1} \mathbf{J}_{t \times (s-1)}\\
			&= \frac{1}{q+1} \left[\frac{q(q+1)(t-1) -q^2(s-1)(t-1) -q^2(t-1)}{q^2(s-1)(t-1)-1} + 1\right] \mathbf{J}_{t \times (s-1)}\\
			&= \frac{q(t-1) -1}{(q+1)(q^2(s-1)(t-1)-1)} \mathbf{J}_{t \times (s-1)}.
		\end{split}
	\end{equation*}
	At this stage, it remains to compute the $(3,3)$ entry $\mathscr{D} \mathscr{L} +\mathbf{I}_{\mathbf{n}}$.
	\begin{equation*} 
		\begin{split}
			(\mathscr{D} \mathscr{L} +\mathbf{I}_{\mathbf{n}})_{33}
			&= \frac{q}{q+1} (\mathscr{D}\mathbf{A})_{33} - \frac{q^2}{q+1} (\mathscr{D}\mathbf{B})_{33} -  \frac{q^2}{q+1} (\mathscr{D}\textup{diag}(\mathbf{y}))_{33} + \frac{1}{q+1} \mathscr{D}_{33} + \mathbf{I}_{t}\\
			&=\frac{q}{q+1} \cdot \frac{s \mathbf{J}_{t}}{q^2(s-1)(t-1)-1} -  \frac{q^2}{q+1} \cdot \frac{(q+1)(s-1)\left((t-2) \mathbf{J}_{t} + \mathbf{I}_{t}\right)}{q^2(s-1)(t-1)-1}\\
			&\quad - \frac{q^2}{q+1} \cdot \frac{(q+1)(s-1)\left(\mathbf{J}_{t} - \mathbf{I}_{t}\right)}{q^2(s-1)(t-1)-1} + \mathbf{J}_{t}.\\
		\end{split}
	\end{equation*}
	Note that the terms containing $\mathbf{I}_{t}$ cancel each other. Now, let us group the terms containing $\mathbf{J}_{t}$, which leads to
	\begin{equation*} 
		\begin{split}
			& \left[\frac{q}{q+1} \cdot \frac{s}{q^2(s-1)(t-1)-1} -  \frac{q^2}{q+1} \cdot \frac{(q+1)(s-1)(t-2)}{q^2(s-1)(t-1)-1}\right. \\
			&\quad \left. - \frac{q^2}{q+1} \cdot \frac{(q+1)(s-1)}{q^2(s-1)(t-1)-1} + 1 \right]\mathbf{J}_{t}\\
			&= \frac{q(s-1) -1}{(q+1)(q^2(s-1)(t-1)-1)}\mathbf{J}_{t}.
		\end{split}
	\end{equation*}
	Thus, we have
	\[
	(\mathscr{D} \mathscr{L} +\mathbf{I}_{\mathbf{n}})_{33} = \frac{q(s-1) -1}{(q+1)(q^2(s-1)(t-1)-1)} \mathbf{J}_{t}.
	\]
	Finally comparing all the entries, we have $\mathscr{D} \mathscr{L} +\mathbf{I}_{\mathbf{n}} = \mathds{1}_{\mathbf{n}} \mathbf{x}^T.$ Hence the result follows by induction.
\end{proof}

\begin{theorem}
	Let $G$ be an \( \mathbf{n} \)-vertex bi-block graph with $r > 1$ blocks $K_{m_i,n_i}$, $i=1,2,\cdots,r$, where $\mathbf{n} = \sum_{i=1}^r (m_i+n_i)-r+1$ and $\mathscr{D}$ be the $q$-distance matrix of $G$. Let $q$ be an indeterminate that satisfies Conditions~\ref{con} and \ref{con-1}, then
	\[
	\mathscr{D}^{-1} = - \mathscr{L} + \frac{1}{\lambda_G} \mathbf{x} \mathbf{x}^T.
	\]
\end{theorem}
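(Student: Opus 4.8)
The plan is to verify directly that the matrix on the right-hand side is a (right) inverse of $\mathscr{D}$, and then use invertibility to upgrade this to a genuine two-sided inverse. Since the indeterminate $q$ satisfies Condition~\ref{con-1}, Remark~\ref{rem:det-not0} guarantees $\det(\mathscr{D}) \ne 0$, so $\mathscr{D}$ is nonsingular; hence it suffices to produce a one-sided inverse. The substantive content has already been packaged into the two preceding results, so what remains is a short algebraic verification.

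First I would form the product
\[
\mathscr{D}\left(-\mathscr{L} + \frac{1}{\lambda_G}\mathbf{x}\mathbf{x}^T\right) = -\,\mathscr{D}\mathscr{L} + \frac{1}{\lambda_G}\left(\mathscr{D}\mathbf{x}\right)\mathbf{x}^T.
\]
The two factors appearing here are exactly the quantities controlled by the earlier lemmas: by Theorem~\ref{thm:DL+I} we have $\mathscr{D}\mathscr{L} = \mathds{1}_{\mathbf{n}}\mathbf{x}^T - \mathbf{I}_{\mathbf{n}}$, and by Lemma~\ref{lem:Dx=lamda} we have $\mathscr{D}\mathbf{x} = \lambda_G\mathds{1}_{\mathbf{n}}$. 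Substituting both identities yields
\[
\mathscr{D}\left(-\mathscr{L} + \frac{1}{\lambda_G}\mathbf{x}\mathbf{x}^T\right) = -\left(\mathds{1}_{\mathbf{n}}\mathbf{x}^T - \mathbf{I}_{\mathbf{n}}\right) + \frac{1}{\lambda_G}\left(\lambda_G\mathds{1}_{\mathbf{n}}\right)\mathbf{x}^T = \mathbf{I}_{\mathbf{n}},
\]
where the two copies of $\mathds{1}_{\mathbf{n}}\mathbf{x}^T$ cancel. This identifies $-\mathscr{L} + \frac{1}{\lambda_G}\mathbf{x}\mathbf{x}^T$ as a right inverse of $\mathscr{D}$, and since $\mathscr{D}$ is a square invertible matrix, a right inverse coincides with the unique two-sided inverse, which proves the claim.

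The only point needing a separate word is that $\lambda_G \ne 0$, so that the scalar $\frac{1}{\lambda_G}$ in the statement is well defined. I would deduce this from Theorem~\ref{thm:det-cof} together with Definition~\ref{defn:lambdaG}: a direct computation shows that $\det(\mathscr{D}(G_i))/\xi(\mathscr{D}(G_i)) = -\lambda_{G_i}$ for each block, whence $\det(\mathscr{D}) = -\lambda_G\,\xi(\mathscr{D})$. Under Conditions~\ref{con} and~\ref{con-1} both $\det(\mathscr{D})$ and $\xi(\mathscr{D})$ are nonzero, forcing $\lambda_G \ne 0$. I do not anticipate any genuine obstacle here: the lengthy $3\times 3$ block computation that constitutes the real difficulty was already dispatched in Theorem~\ref{thm:DL+I}, and the auxiliary action of $\mathscr{D}$ on $\mathbf{x}$ was settled in Lemma~\ref{lem:Dx=lamda}, so the present theorem reduces to the two-line cancellation above.
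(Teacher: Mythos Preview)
Your proof is correct and follows essentially the same route as the paper: both invoke Theorem~\ref{thm:DL+I} and Lemma~\ref{lem:Dx=lamda}, then combine them with the invertibility of $\mathscr{D}$ guaranteed by Remark~\ref{rem:det-not0}. Your explicit justification that $\lambda_G \ne 0$ via the identity $\det(\mathscr{D}) = -\lambda_G\,\xi(\mathscr{D})$ is a welcome addition that the paper leaves implicit.
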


\begin{proof}
	Using Theorem~\ref{thm:DL+I}, we have that $\mathscr{D} \mathscr{L} +\mathbf{I}_{\mathbf{n}} = \mathds{1}_{\mathbf{n}} \mathbf{x}^T.$ Since the indeterminate $q$ satisfies Condition~\ref{con-1}, it follows from Remark~\ref{rem:det-not0} that $\mathscr{D}$ is invertible hence we have
	\[
	\mathscr{L} + \mathscr{D}^{-1} =  \mathscr{D}^{-1} \mathds{1}_{\mathbf{n}} \mathbf{x}^T.
	\]
	Now using Lemma~\ref{lem:Dx=lamda}, we have $\mathscr{D}^{-1} \mathds{1}_{\mathbf{n}} = \frac{1}{\lambda_{G}} \mathbf{x}$. Thus, we have
	\[
	\mathscr{D}^{-1} = - \mathscr{L} + \frac{1}{\lambda_G} \mathbf{x} \mathbf{x}^T
	\]
	and the result follows.
\end{proof}

\section*{Acknowledgments}
The author, Joyentanuj Das, gratefully acknowledges Iswar Mahato for introducing the problem, which arose during their discussions.

\small{

}

\end{document}